\begin{document}

%@modernhindi

\newcommand{\Q}{{\mathbb Q}}
\newcommand{\C}{{\mathbb C}}
\newcommand{\R}{{\mathbb R}}
\newcommand{\Z}{{\mathbb Z}}
\newcommand{\F}{{\mathbb F}}
\newcommand{\bF}{{\bar\F}}
\renewcommand{\wp}{{\mathfrak p}}
\renewcommand{\P}{{\mathbb P}}
\renewcommand{\O}{{\mathcal O}}
\newcommand{\Pic}{{\rm Pic\,}}
\newcommand{\Ext}{{\rm Ext}\,}
\newcommand{\rank}{{\rm rk}\,}
\newcommand{\sbull}{{\scriptstyle{\bullet}}}
\newcommand{\bX}{X_{\overline{k}}}
\newcommand{\ch}{\operatorname{CH}}
\newcommand{\tors}{\text{tors}}
\newcommand{\cris}{\text{cris}}
\newcommand{\alg}{\text{alg}}
\newcommand{\tX}{{\tilde{X}}}
\newcommand{\tL}{{\tilde{L}}}
\newcommand{\Hom}{{\rm Hom}}
\newcommand{\spec}{{\rm Spec}}
\newcommand{\gal}{{\rm Gal}}
\renewcommand{\int}{\operatorname{int}}
\let\isom=\simeq
\let\rk=\rank
\let\tensor=\otimes
\newcommand{\X}{\mathfrak{X}}
\newcommand{\M}{\mathcal{M}}
\newcommand{\A}{\mathcal{A}}
\newcommand{\N}{\mathcal{N}}
\newcommand{\Jac}{\rm{Jac}}
\newcommand{\End}{\rm{End}}
\newcommand{\U}{\mathcal{U}}
\newcommand{\Ub}{\bar{\mathcal{U}}}
\newcommand{\mydot}{{\small{\bullet}}}
\renewcommand{\go}{{\rm GO}}
\newcommand{\pgo}{{\rm PGO}}
\newcommand{\disc}{\mathop{{\rm disc}}}
\newcommand{\Nrd}{{\rm Nrd}}
\let\GO=\go
\let\PGO=\pgo
\newcommand{\Gspin}{{\rm GSpin}}
\renewcommand{\O}{{\rm O}}
\newcommand{\spin}{{\rm Spin}}
\let\Spin=\spin
\let\SPin=\spin
\let\GSpin=\Gspin
\let\gspin=\Gspin
\newcommand{\Gm}{\mathbb{G}_m}
\let\into=\hookrightarrow
\let\congruent=\equiv
\newcommand{\frob}{{\rm Frob}}
\newcommand{\hasse}[3]{\left({{#1},{#2}\over\Q_{#3}}\right)}
\newcommand{\hassef}[3]{\left({{#1},{#2}\over F_{#3}}\right)}
\newcommand{\Spec}{{\rm Spec}}

\newcommand{\Trd}{{\rm Trd}}

\newtheorem{theorem}[equation]{Theorem}      % (If you want theorem numbered
\newtheorem{lemma}[equation]{Lemma}          %
\newtheorem{corollary}[equation]{Corollary}  %       goes for lemmas, etc.)
\newtheorem{proposition}[equation]{Proposition}
\newtheorem{scholium}[equation]{Scholium}

\theoremstyle{definition}
\newtheorem{conj}[equation]{Conjecture}
\newtheorem*{example}{Example}
\newtheorem{question}[equation]{Question}

\theoremstyle{definition}
\newtheorem{remark}[equation]{Remark}

\numberwithin{equation}{subsection}

\title{Musings on $\Q(1/4)$: Arithmetic spin structures on elliptic curves}
\author{Kirti Joshi}
\address{Math. department, University of Arizona, 617 N Santa Rita, Tucson
85721-0089, USA.} \email{kirti@math.arizona.edu}
\date{}

\begin{abstract}
We introduce and study arithmetic spin structures on
elliptic curves. We show that there is a unique isogeny class of
elliptic curves over $\F_{p^2}$ which carries a unique arithmetic
spin structure and provides a geometric object of weight $1/2$ in
the sense of Deligne and Grothendieck. This object is thus a
candidate for $\Q(1/4)$.
\end{abstract}

\maketitle

%\begin{flushright}
%\includegraphics[scale=.35]{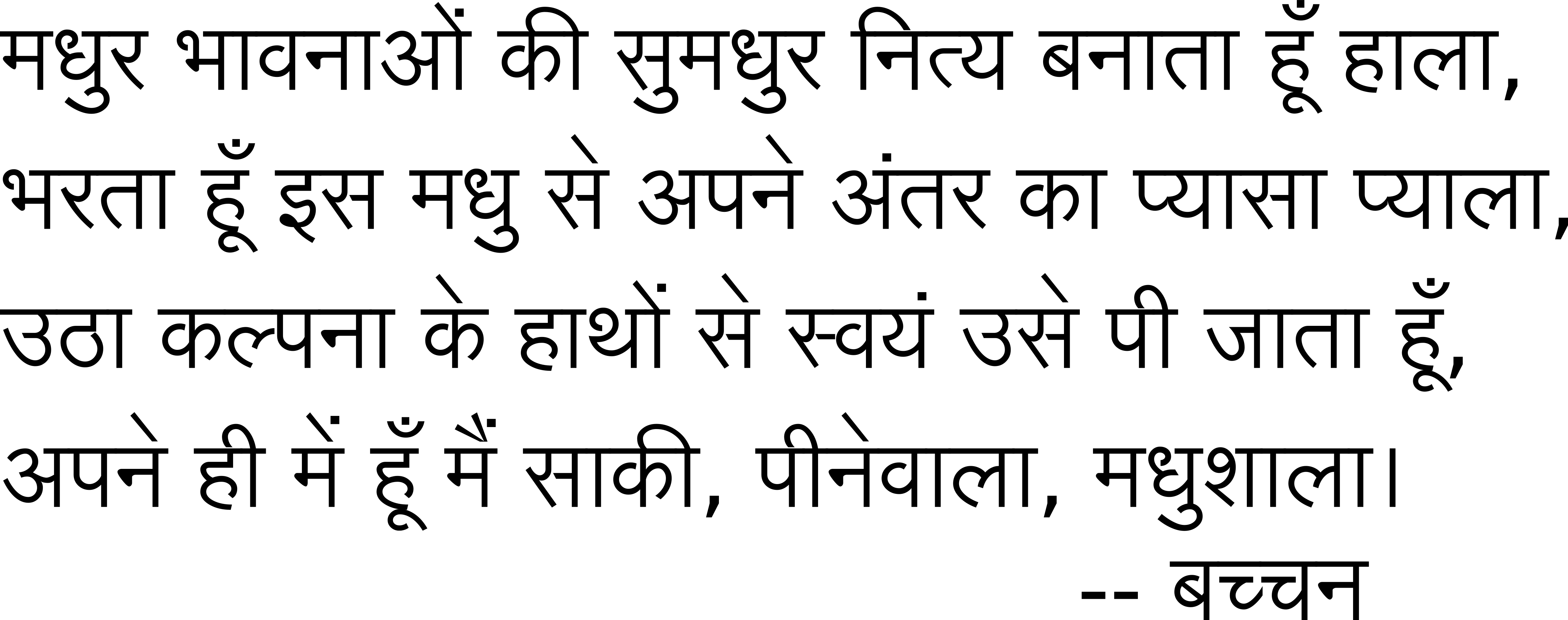}
% if needed translation of the quartet is here:
% daily I distill this sweet wine: spiced with my imagination
% daily I fill the parched goblet of my being
% and drink this brimming cup
% for I am the saqi, the reveler and the tavern.
%               Harivanshrai `Bacchan'.

\footnote{From the ``Madhush\=al\=a'' by Harivansh R\=ai Bachchan.}

\begin{flushright}%{\it
\includegraphics[scale=.35]{madhushala3.png}
\end{flushright}

\tableofcontents

\section{Introduction and statement of the principal results}\label{sec:Introdu}
\subsection{The problem}
In this paper we attempt to provide an intrinsic approach to the
problem of constructing $\Q(1/4)$. We show that there is, up to isomorphism,
a unique geometric object which lives over $\F_{p^2}$,
and which is equipped with $\ell$-adic and $p$-adic realizations of
weight $1/2$ in the sense of \cite{deligne80}. This object is thus a
candidate for $\Q(1/4)$. This object is constructed using what we
call \textit{arithmetic spin structures on elliptic curves}. Our
approach to $\Q(1/4)$ may be considered to be a twisted analogue of
the constructions of \cite{kuga67,deligne72}.  We note that
the problem of constructing ``fractional motives'' and ``exotic'' Tate motives
has also been studied in \cite{anderson86,manin92,deninger94, ramachandran05}.

We say that an elliptic curve is of \textit{spinorial type} if its
algebra of endomorphisms (we consider endomorphisms defined over the
field of definition of the curve) carries a non-trivial involution
of the \textit{first kind} (see
\ref{ssec:Ellipti-curves-spinori-type}). The classification of
endomorphism algebras of elliptic curves shows that an elliptic
curve is of spinorial type if and only if it is supersingular and
its endomorphism algebra is a quaternion algebra (see
Proposition~\ref{pro:EhasSpin}). Thus a supersingular elliptic curve
with all its endomorphisms defined over the ground field is of
spinorial type; and every supersingular elliptic curve becomes of
spinorial type over some quadratic extension. By the well-known
classification of isogeny classes of elliptic curves over a finite
field one sees that for a given finite field there are at most two
isogeny classes of elliptic curves which are of spinorial type.

Involutions of the first kind on an algebra are  classified as
\textit{orthogonal or symplectic} depending on what they look like
over any splitting field of the algebra. A \textit{spin structure}
$(B,\sigma)$ on an elliptic curve of spinorial type is a choice of
an involution $\sigma$ of the first kind and orthogonal type (i.e of
an orthogonal involution--see \ref{ssec:Orthogo-symplec-involut}) on
the endomorphism algebra (denoted here by $B$) of the curve. On a
quaternion algebra, orthogonal involutions of the first kind are
classified (up to isomorphism) by their discriminant. Every spin
structure comes equipped with a (even) Clifford algebra (see
\ref{ssec:Cliffor-Algebra-associa-pair}). In the present situation,
because the quaternion algebra $B$ of endomorphisms of a
supersingular elliptic curve is ramified  at infinity (and at $p$,
the characteristic of ground field), the (even) Clifford algebras
$C^+(B,\sigma)$ which can arise are imaginary quadratic extensions
of $\Q$ (see Theorem~\ref{the:existenceofarithmetic}).

\subsection{The results over $\F_{p^2}$} We now describe the results we obtain over $\F_{p^2}$
(they are also proved here for $\F_{p^{2n}}$ with $n$ odd). We show
(in Proposition~\ref{frobeniussimilitude}) that the Frobenius
endomorphism of $E$ is a \textit{proper similitude} of this spin
structure, and the \textit{group of proper similitudes}, denoted
here by $\go^+(B,\sigma)$, is the non-split torus (obtained by the
restriction of scalars of $\Gm$ from the Clifford algebra
$C^+(B,\sigma)$ of the spin structure--see
Proposition~\ref{pro:goplus}). The \textit{spin group (rather the
general spin group)} associated to $(B,\sigma)$ is again a non-split
torus and is a cover of the group of similitudes (see
\ref{ssec:GSpin}). This allows us to speak of constructing square
roots of the Frobenius endomorphism. A necessary and sufficient
condition that Frobenius endomorphism have a square root in the
Clifford algebra is that the Clifford algebra of the spin structure
is $\Q[x]/(x^2+p)$ (Theorem~\ref{existence-criterion}). This means
that the involution underlying the spin structure has discriminant
$-p$; moreover as the Frobenius endomorphism is a central element
operating by $\pm p$, one sees that there are spin structures for
which Frobenius does not have a square root.

We show (see Theorem~\ref{the:existenceofarithmetic} and
Corollary~\ref{cor:existenceoverfp2}) that for an elliptic curve
$E/\F_{p^2}$ of spinorial type, whose Frobenius endomorphism acts by
multiplication by $-p$, carries a unique spin structure of
discriminant $-p$ (uniqueness is up to isomorphism). For this spin
structure the Frobenius endomorphism admits square roots
($\pm\sqrt{-p}$) in the Clifford algebra. This gives rise to a
spinorial representation of the Weil group
$$\rho_{E,\sigma}^{spin}:W(\bF_{p^2}/\F_{p^2})\to \gspin(B,\sigma)$$ (here
$B=\End(E)$) which lifts the canonical \textit{similitude representation}
$$\rho_{E,\sigma}:W(\bF_{p^2}/\F_{p^2})\to\GO^+(B,\sigma).$$  This spinorial representation is of weight
$1/2$ in the sense that the absolute value of the eigenvalues of
Frobenius is $$\sqrt{p}=(p^2)^{1/4}=(p^2)^{(\frac{1}{2})/2}.$$ Spin
structures for which the similitude representation admits a
spinorial lifting are said to be \textit{arithmetic spin
structures}. Over $\F_{p^2}$, arithmetic spin structures of
discriminant $-p$ can exists on elliptic curve of spinorial type if
and only if the Frobenius endomorphism operates by $-p$; and a
fortiori, there are no orthogonal involutions on the algebra
$B=\End(E)$ with positive discriminant (where $E$ is of spinorial
type). So not all spin structures can exist or when they exist are
arithmetic. The data  $(E,(B,\sigma))$ consisting of elliptic curve
$E/\F_{p^2}$ together with  an arithmetic spin structure
$(B,\sigma)$ on $E$  is a geometric object whose weight is half:
this our candidate for $\Q(1/4)$. There is exactly one isogeny class
of elliptic curves over $\F_{p^2}$ which provides such a structure
and the spin structure is unique up to isomorphism. In
\ref{ell-realization} and \ref{p-realization} we construct the
$\ell$-adic and the crystalline realizations of $\Q(1/4)$. We note
that the Clifford algebra of spin structures (arithmetic or not) are
imaginary quadratic extensions of $\Q$ (and so are not isomorphic to
$\Q\times\Q$).

In \cite{ramachandran05} one finds  a formula for the values of the
Hasse-Weil zeta function of a smooth, projective variety over $\F_{p^2}$ at $s=1/2$.
It has been expected that this formula must have an arithmetic
explanation in terms of a motive $\Z(1/2)$. As a consequence of our theory
we prove a simple relation (see
Theorem~\ref{l-identities}) between the $L$-function of the elliptic
curve with an arithmetic spin structure and the associated spinorial
representation. This is
similar (in spirit) to the formula proved in \cite{ramachandran05}.

The theory developed here should be viewed as a $\Q$-theory rather
than a $\Z$-theory because we have worked throughout with the
quaternion algebra (so up to isogeny).  But for a $\Z$-theory one
should work with orders in the quaternion algebra, as endomorphism
rings arise more naturally as  orders). We hope to return to this in
later paper. The theory developed here can be also applied to rank
two supersingular Drinfeld modules, but we will defer this to a
subsequent paper as well.

\subsection{Acknowledgements} This paper grew out of a talk given by Ramachandran on
\cite{ramachandran05}. We thank him for conversations and
correspondence about \cite{ramachandran05}. Thanks are also due to
Preeti Raman for conversations on Clifford groups and for help with
identifying $\gspin(B,\sigma)$ in the case we need here. Thanks are
also due to Dinesh Thakur for encouragement and for a careful
reading of the manuscript. We would like to thank Christopher
Deninger for comments and suggestions which have improved this manuscript. Some part of this work was
carried out while the author was visiting the Tata Institute and the
Universit\'{e} Montpellier II and we thank both for their
hospitality.

\section{Recollections from the theory of Clifford
algebras}\label{sec:Recolle-from-theory-Cliffor-algebra}
\subsection{Preparatory remarks}
The fundamental reference for what we need here is \cite{knus-book}.
Readers unfamiliar with the theory of Clifford algebras may first
want to read \cite{chevalley-spinors} but should bear in mind that
unlike \cite{chevalley-spinors} we will work with a twisted
situation. All facts we need about Clifford algebras constructed
from quaternion algebras with orthogonal involutions are found in
\cite{knus-book}. The reader is advised to keep that work handy
while reading the present paper. Since the machinery of twisted
quadratic spaces may not be familiar to the readers we have, for the
reader's convenience, inserted a ``recurring example'' which
explains the machinery in the context we need to use for the main
results of this paper.

Throughout the paper $F$ will
denote a field of characteristic not equal to two (zero is allowed).
The results can probably be carried out in case the characteristic
is two but the details are more complicated so we will leave them
aside for the moment.

\subsection{Inner automorphisms} In what follows, we will work with finite dimensional algebras over a field $F$.
Let $A$ be a finite dimensional algebra  over a field $F$. For an
invertible $u\in A$, we write $\int(u):A\to A$ for the inner
automorphism of $A$ defined by $u$, given explicitly by
$\int(u)(a)=u\circ a\circ u^{-1}$ for any $a\in A$.

\subsection{Involutions}
Let $F$ be a field of characteristic not equal to two. Let $A/F$ be
a finite dimensional algebra over $F$. An \emph{involution of the first
kind} on $A$ is a map $\sigma:A\to A$ such that $\forall x,y\in A$,
\begin{enumerate}
\item  $\sigma(x+y)=\sigma(x)+\sigma(y)$,
\item $\sigma(xy)=\sigma(y)\sigma(x)$ (so $\sigma$ is an anti-morphism)
\item $\sigma^2(x)=x$,
\item $\sigma$ is identity on the center of $A$.
\end{enumerate}

\subsection{Classification of involutions}\label{ssec:Constru-involut}
\subsubsection{The split case} Let $V$ be a finite dimensional vector
space over a field extension $K/F$ and let $b:V\times V\to K$ be a
non-degenerate bilinear form with values in $K$. Then for any
$f\in\End_K(V)$ we define $\sigma_b(f)\in\End_K(V)$ by the following
property: for all $v,w\in V$ we have
\begin{equation*}
b(v,f(w))=b(\sigma_b(f)(v),w).
\end{equation*}
Then $f\mapsto \sigma_b(f)$ is a $K$-linear  anti-automorphism of
$\End_K(V)$, called the adjoint automorphism of $\End_K(V)$ with
respect to the non-degenerate bilinear form $b$. The mapping
$b\mapsto \sigma_b$ is a bijection between the equivalence classes
of non-degenerate bilinear forms on $V$ up to scalar multiples and
the set of $K$-linear anti-automorphisms of $\End_K(V)$. Under this
equivalence $\sigma_b$ provides an involution (i.e. a $K$-linear
anti-automorphism of order two) if and only if $b$ is symmetric or
skew-symmetric. For a proof see \cite[Page 1]{knus-book}.

\subsubsection{The general case} We now describe involutions $\sigma$
on arbitrary central simple algebras. Let $(A,\sigma)$ be a pair
where $A/F$ is a central simple algebra and $\sigma:A\to A$ is an
involution. Let $K/F$ be a splitting field of $A/F$. Then
$\sigma_K:A\tensor_FK\to A\tensor_F K$ is an involution of $A_K$ and
by the previous paragraph, we see that $\sigma_K$ arises from a
non-singular bilinear form which is either symmetric or
skew-symmetric.

\subsection{Orthogonal and symplectic
involutions}\label{ssec:Orthogo-symplec-involut}
Let $(A,\sigma)$ be a pair as above. We say that $\sigma$ is
\emph{symplectic} (resp. \emph{orthogonal}) involution if for any
splitting field $K/F$ (and any isomorphism $A_K\to\End_K(V)$, the
involution $\sigma_K$ of $A_K$ arises from a non-degenerate
skew-symmetric (resp. symmetric) bilinear form on $V$.

\subsection{Twisted quadratic spaces}\label{ssec:Twisted-quadrat-spaces}
Let $F$ be a field (of characteristic not equal to two). A
\emph{twisted quadratic space} over $F$ is a pair $(A,\sigma)$ where
$A/F$ is a central simple algebra over $F$ and $\sigma: A\to A$ is
an involution of first kind and of orthogonal type. Morphisms of
twisted quadratic spaces are defined in the obvious way.

\subsection{Twisted quadratic spaces and quadratic spaces}
If $(A,\sigma)$ is a twisted quadratic space over $F$ and if $A$ is
split with $A=\End_F(V)$, then $\sigma=\sigma_b$ for a symmetric
bilinear form $b:V\times V\to F$ and the pair $(A,\sigma)$ is
isomorphic to the pair $(\End_F(V), \sigma_b)$ and this is
equivalent to the data $(V,q_b)$ where $q_b: V\to F$  is the
quadratic form associated to the symmetric bilinear form $b$.  Thus
in the split case the data of a twisted quadratic space is simply
the data of a quadratic space. We note that the term ``twisted
quadratic space'' was not introduced in \cite{knus-book} but clearly
seems appropriate.

\subsection{Recurring Example}\label{ssec:Recurri-Example}
This example will recur throughout and is the case we want to consider for the main results of
this paper. So we provide this along for the convenience of the reader.

Let $B/F$ be a quaternion algebra. Then $B$ has a basis $1,i,j,k$
with $i^2,j^2\in F^*$, $ij=k=-ji$. If $i^2=a,j^2=b$ we will write
this algebra as $B=\hassef{a}{b}{}$. The map $\gamma:B\to B$ given
by sending $$\gamma(x_0+ix_1+jx_2+kx_3)= x_0-ix_1-jx_2-kx_3$$ is the
unique symplectic involution on $B$ (see \cite[Proposition~2.21, page 26]{knus-book}).

Every orthogonal involution $\sigma$ on $B$ is of the form
$$\sigma=\int(u)\circ\gamma$$ where $\gamma(u)=-u$ and $u\in B^*$ is
uniquely determined up to a scalar in $F^*$ by $\sigma$ (see \cite[Proposition~2.21, page 26]{knus-book}).

By \cite[Corollary~2.8 (page 18) and Proposition~2.21 (page 26)]{knus-book}
every quaternion algebra $B/F$ carries both symplectic and
orthogonal involutions. The reduced norm of $u\in B$ is given
$\Nrd(u)=u\gamma(u)$, the reduced trace of $u\in B$ is given by
$\Trd(u)=u+\gamma(u)$. Observe that $u$ has reduced trace zero if
and only if $\gamma(u)=-u$ (i.e., $u$ is a ``pure quaternion'').

\subsection{Discriminants}\label{ssec:Discrim}
\subsubsection{The split case} We first describe the discriminant of
a quadratic space. Let $(V,q)$ be a non-degenerate quadratic space
over $F$. Let $b$ be the associated bilinear form. For a basis
$e_1,\ldots, e_n$ of $V$, the matrix $\det(b(e_i,e_j))\neq0$ and its
class in $F^*/F^{*2}$ is independent of the choice of the basis
$e_1,\ldots,e_n$ of $V$. We denote this class in $F^*/F^{*2}$ by
$\disc(b)$. The discriminant of $(V,q)$ is the class
$$\disc(V,q)=\disc(q)=(-1)^{\frac{n(n-1)}{2}}\det(b)\in F^*/F^{*2}.$$

\subsubsection{The general case}\label{sssec:disc-general-case}
For an even degree central simple algebra $A$ and
any orthogonal involution $\sigma$, we define the determinant
$$\det(\sigma)=\Nrd_A(a)\in F^*/F^{*2},$$ for any element $a\in A^*$
such that $\sigma(a)=-a$. This class is again independent of the
choice of such an $a$. The discriminant of $\sigma$ is given by
$$\disc(\sigma)=(-1)^\frac{\deg(A)}{2}\det(\sigma)\in F^*/F^{*2}.$$
If $\sigma=\int(u)\circ\gamma$ then
$$\disc(\sigma)=-\Nrd_A(u)\in F^*/F^{*2}.$$ For a proof
see \cite[Page 81, Proposition 7.3(2)]{knus-book}. We note that the
sign given in that reference is not correct (ours is correct) as can
be easily seen from the proof given on \cite[Pages 81-82,
Proposition 7.3]{knus-book}.

Moreover if $A$ is a quaternion division algebra then $A$ does not
carry any orthogonal involutions with trivial discriminant (see
\cite[page 82, Example 7.4]{knus-book}).

\subsection{Clifford Algebra associated to a twisted quadratic space
$(A,\sigma)$}\label{ssec:Cliffor-Algebra-associa-pair} Let
$(A,\sigma)$ be a twisted quadratic space. Then there exits a (even)
Clifford algebra denoted by $C^+(A,\sigma)$ which is functorial in
the pair and if $A=\End(V)$ for a vector space $V/F$, then
$C^+(A,\sigma)$ agrees with the \textit{even} clifford algebra constructed in
the usual manner. See \cite[page 91]{knus-book}.

\subsection{The center of the Clifford
algebra}\label{ssec:center-Cliffor-algebra} Let us assume from now
on that $(A,\sigma)$ is a twisted quadratic space over $F$ with
$\deg(A)=2m$, $m\geq 1$. The main case of interest to us will be the
case $m=1$, though we will not assume this preferring to work out
the general theory instead and specializing when we need to do so.

The center $Z(A,\sigma)$ of $C^+(A,\sigma)$ is an \'etale
(=separable) quadratic $F$-algebra. If $Z$ is a field, then
$C^+(A,\sigma)$ is a central simple algebra of degree $2^{m-1}$ over
$Z$; if $Z=F\times F$, then $C^+(A,\sigma)$ is a direct product of
two copies of central simple $F$-algebras of degree $2^{m-1}$.
Moreover the center $Z$ of $C^+(A,\sigma)$ is given by the following
recipe (see \cite[Theorem 8.10, page 94]{knus-book}).
\begin{theorem}\label{center-and-disc1}
Let $(A,\sigma)$ be a twisted quadratic space over $F$. Let
$C^+(A,\sigma)$ be the associated even Clifford algebra. Let
$Z=Z(A,\sigma)\subset C^+(A,\sigma)$ be its center. If the
characteristic of $F$ is not two, then $Z=F[X]/(X^2-\delta_\sigma)$ where
$\delta_\sigma\in F^*$ is a representative of the discriminant of $\sigma$,
$disc(\sigma)\in F^*/F^{*2}$.
\end{theorem}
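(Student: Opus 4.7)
The plan is to reduce to the split case, perform a direct computation with an orthogonal basis, and then descend.

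\textbf{Step 1: Split base change.} First I would pick a splitting field $K/F$ of $A$ together with an isomorphism $A_K \cong \End_K(V)$ for a $2m$-dimensional $K$-vector space $V$. By \ref{ssec:Constru-involut}, the extended involution $\sigma_K$ becomes $\sigma_b$ for a non-degenerate symmetric bilinear form $b$ on $V$, so the pair $(A_K, \sigma_K)$ corresponds to the ordinary quadratic space $(V, q_b)$, and by \ref{ssec:Cliffor-Algebra-associa-pair} the twisted Clifford algebra $C^+(A_K, \sigma_K)$ agrees with the classical even Clifford algebra $C_0(V, q_b)$. The question of identifying the center is thereby reduced to a computation in the classical setting.

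\textbf{Step 2: Computation in the split case.} Choose an orthogonal basis $e_1, \dots, e_{2m}$ of $(V,q_b)$ with $q_b(e_i) = a_i \in K^*$, and set $z := e_1 e_2 \cdots e_{2m} \in C_0(V, q_b)$. Using $e_i e_j = -e_j e_i$ for $i\ne j$ and $e_i^2 = a_i$, one checks directly that $z$ commutes with every product $e_i e_j$ and hence lies in the center of $C_0(V,q_b)$; counting dimensions shows this center is two-dimensional over $K$, spanned by $1$ and $z$. A standard sign count gives
\[
z^2 = (-1)^{m(2m-1)} a_1 \cdots a_{2m} = (-1)^m a_1 \cdots a_{2m},
\]
which by definition represents $\disc(q_b) = \disc(\sigma_K) \in K^*/K^{*2}$. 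Thus over $K$ we have $Z(C^+(A_K,\sigma_K)) \cong K[X]/(X^2 - \delta_\sigma)$.

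\textbf{Step 3: Descent.} The formation of $C^+(A,\sigma)$ and the passage to its center commute with arbitrary scalar extensions, and the discriminant $\disc(\sigma) \in F^*/F^{*2}$ is defined intrinsically over $F$ via \ref{sssec:disc-general-case}. Hence $Z(A,\sigma)\otimes_F K \cong K[X]/(X^2 - \delta_\sigma)$, and Galois descent from $K$ to $F$ identifies $Z(A,\sigma)$ with $F[X]/(X^2 - \delta_\sigma)$, as claimed. The main obstacle is the bookkeeping in Step 2: one must match the sign $(-1)^m$ emerging from the Clifford relations with the sign $(-1)^{\deg(A)/2}$ built into the definition $\disc(\sigma) = (-1)^{\deg(A)/2}\det(\sigma)$, and verify that the generator $z$ can be normalized to descend to an element of $Z(A,\sigma)$ modulo $F^{*2}$. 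For the case $m=1$ (quaternion algebras) of primary interest in this paper, this reduces to the explicit formula $\disc(\sigma) = -\Nrd_A(u)$ for $\sigma = \int(u)\circ\gamma$ noted in \ref{sssec:disc-general-case}, which is exactly the expected square of the center generator.
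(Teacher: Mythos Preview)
The paper does not actually prove this theorem: it is stated with the attribution ``see \cite[Theorem 8.10, page 94]{knus-book}'' and no argument is given. So there is no in-paper proof to compare against; your outline supplies strictly more than the paper does, and it follows the standard route one finds in that reference (pass to a splitting field, compute the center of the classical even Clifford algebra via the volume element $z=e_1\cdots e_{2m}$, then descend).

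Your Steps~1--2 are fine and the sign bookkeeping is correct. The only real soft spot is Step~3. Knowing that $Z(A,\sigma)\otimes_F K \cong K[X]/(X^2-\delta_\sigma)$ for a single splitting field $K$ does \emph{not} by itself pin down $Z(A,\sigma)$ as an \'etale quadratic $F$-algebra: two distinct quadratic $F$-algebras can become isomorphic after base change. What actually makes the descent work is that both $Z(A,\sigma)$ and $\disc(\sigma)$ are \emph{functorial} invariants of $(A,\sigma)$, i.e.\ they are compatible with arbitrary scalar extension and hence define the same class in $H^1(F,\Z/2\Z)\cong F^*/F^{*2}$; equivalently, one checks that the volume element $z$ can be constructed intrinsically inside $C^+(A,\sigma)$ (up to $F^{*}$) without first splitting $A$. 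You flag exactly this issue at the end of Step~3, so you are aware of it, but as written the sentence ``Galois descent from $K$ to $F$ identifies $Z(A,\sigma)$ with $F[X]/(X^2-\delta_\sigma)$'' is the step that would need to be expanded into an honest argument.
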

\begin{corollary}\label{center-and-disc2}
Let $B/F$ be a quaternion algebra and $\sigma$ be an orthogonal
involution on $B$. Then the even Clifford algebra $C^+(B,\sigma)$ is commutative and we have
$$C^+(B,\sigma)=Z=F[X]/(X^2-\delta_\sigma),$$
where $\delta_\sigma=disc(\sigma)\mod{F^{*2}}$.
\end{corollary}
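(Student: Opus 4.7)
The plan is to deduce this corollary as a direct specialization of Theorem~\ref{center-and-disc1} to the case where $A = B$ is a quaternion algebra, which corresponds to $m=1$ in the notation of that theorem.

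First I would observe that since $B/F$ is a quaternion algebra we have $\deg(B) = 2$, so in the notation $\deg(A) = 2m$ we have $m = 1$. Applying Theorem~\ref{center-and-disc1} to the pair $(B,\sigma)$ (which is indeed a twisted quadratic space since $\sigma$ is orthogonal and of the first kind), the center $Z = Z(B,\sigma)$ of $C^+(B,\sigma)$ is the \'etale quadratic $F$-algebra $F[X]/(X^2 - \delta_\sigma)$ for any lift $\delta_\sigma \in F^*$ of $\disc(\sigma)$.

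Next I would read off the dimension of $C^+(B,\sigma)$ over $Z$ from the theorem's dichotomy: either $Z$ is a field and $C^+(B,\sigma)$ is central simple of degree $2^{m-1}$ over $Z$, or $Z = F \times F$ and $C^+(B,\sigma)$ is a product of two central simple $F$-algebras of degree $2^{m-1}$. In both cases, with $m = 1$, the degree is $2^{m-1} = 1$, so $C^+(B,\sigma)$ coincides with its own center. Therefore $C^+(B,\sigma)$ is commutative and equal to $Z = F[X]/(X^2 - \delta_\sigma)$, as claimed.

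There is essentially no obstacle here: the corollary is a routine specialization, and the only point requiring any care is noting that $2^{m-1} = 1$ when $m=1$, which forces $C^+(B,\sigma) = Z$ regardless of whether $Z$ is a field or splits as $F \times F$. (In the latter case $\delta_\sigma \in F^{*2}$ and $F[X]/(X^2 - \delta_\sigma) \isom F \times F$, which is consistent with both descriptions.)
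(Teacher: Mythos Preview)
Your proposal is correct and is exactly the intended argument: the paper states this result as an immediate corollary of Theorem~\ref{center-and-disc1} without further proof, and your specialization to $m=1$ (so that $2^{m-1}=1$ forces $C^+(B,\sigma)=Z$) is precisely what makes it a corollary.
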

\subsection{Similitudes and the group of
similitudes}\label{ssec:Similit-group-similit} Let $(A,\sigma)$ be a
twisted quadratic space over $F$ as before. We study several groups
which arise in the present context.

A \emph{similitude} of $(A,\sigma)$ is an element $g\in A$ such that
$\sigma(g)g\in F^*$. Then $\mu(g)=\sigma(g)g$ is called the
multiplier of the similitude $g$ of $(A,\sigma)$. Similitudes of
$(A,\sigma)$ form a subgroup of $A^*$ which we denote by
$\go(A,\sigma)$. We have a homomorphism $\mu:\go(A,\sigma)\to F^*$
given by $g\mapsto \mu(g)=\sigma(g)g$. We define
$\pgo(A,\sigma)=\go(A,\sigma)/F^*$ and we have the exact sequence
$$1\to F^*\to \go(A,\sigma)\to \pgo(A,\sigma) \to 1.$$

Similitudes $g\in \go(A,\sigma)$ with $\mu(g)=1$ are called
\emph{isometries} and we have a subgroup
$O(A,\sigma)=\ker(\mu)\subset\go(A,\sigma)$ of isometries of
$(A,\sigma)$. We have an exact sequence of algebraic groups
$$1\to O(A,\sigma)\to \go(A,\sigma)\to \Gm\to 1.$$

\subsection{Recurring Example}\label{ssec:Recurri-Example2}
Let $(B,\sigma)$ be a twisted quadratic space over $F$ with $B$ a quaternion
division algebra (see \ref{ssec:Recurri-Example}).
Let  $\sigma=\int(u)\circ\gamma$ where $u\in B^*$ is a pure quaternion (so $\gamma(u)=-u$) and
$\gamma$ the canonical symplectic involution. Then $\go(A,\sigma)=F(u)^*\cup F(u)^*v$ where $v$
is an invertible quaternion which anti-commutes with $u$ (i.e. $uv=-vu$).

\subsection{Proper similitudes}\label{ssec:Proper-similit}
Let $\deg(A)=n=2m$. For every $g\in \go(A,\sigma)$ we have
$\Nrd_A(g)=\pm\mu(g)^m$, where $\Nrd(g)$ is the reduced norm of $g$.
We say that $g$ is a \emph{proper similitude} of $(A,\sigma)$ if
$\Nrd_A(g)=\mu(g)^m$. The set of proper similitudes of $(A,\sigma)$
is a subgroup of $\go(A,\sigma)$ denoted by $\go^+(A,\sigma)$. We
let $\pgo^+(A,\sigma)=\go^+(A,\sigma)/F^*$ and
$O^+(A,\sigma)=\go^+(A,\sigma)\cap O(A,\sigma)$ be the subgroup of
proper isometries of $(A,\sigma)$.

\subsection{Recurring Example}\label{ssec:Recurri-Example3}
Let $(B,\sigma)$ be a twisted quadratic space, with $B/F$ a quaternion division algebra
(see \ref{ssec:Recurri-Example},  \ref{ssec:Recurri-Example2}).
Suppose that $\sigma=\int(u)\circ\gamma$, let $N_{F(u)/F}:F(u)^*\to F^*$ be the norm map. Then we
have $$\go^+(B,\sigma)=F(u)^*,$$ and $O^+(B,\sigma)=O(B,\sigma)=\left\{z\in F(u)|
N_{F(u)/F}(z)=1\right\}$ if $B$ is not split.

\subsection{The Clifford Group}\label{ssec:Cliffor-Group}
In the twisted quadratic case, there is a Special Clifford group but
not the Clifford group and we recall this now.

Let $(A,\sigma)$ be a twisted quadratic space over a field $F$. We
have associated to $(A,\sigma)$ a subgroup
$\Gamma^+(A,\sigma)\subset C^+(A,\sigma)^*$ of the group of units of
the even Clifford algebra associated to the pair $(A,\sigma)$. If
$(A,\sigma)$ is split then $\Gamma^+(A,\sigma)$ can be identified
with the special clifford group of \cite{chevalley-spinors}.

We caution the reader that we use the notation $\Gamma^+(A,\sigma)$
instead of $\Gamma(A,\sigma)$ used in \cite{knus-book} because in
the split case we get the special clifford group using the above
construction (and not the Clifford group which we note, is denoted
by $\Gamma(V,q)$, in \cite{chevalley-spinors} while the special
Clifford group is denoted by $\Gamma^+(V,q)$). It seems to the
author that it is better to follow established conventions of the
theory in split case for psychological reasons.

We have an exact sequence of groups
\begin{equation}
1\to F^*\to \Gamma^+(A,\sigma) \to O^+(A,\sigma) \to 1
\end{equation}

\subsection{Recurring Example}\label{ssec:Recurri-Example4}
Let $(B,\sigma)$ be a twisted quadratic space, with $B/F$ a
quaternion division algebra
(see \ref{ssec:Recurri-Example}, \ref{ssec:Recurri-Example2}, \ref{ssec:Recurri-Example3}), and suppose
that $\sigma=\int(u)\circ\gamma$. Let $F(u)^{1}$ be the subgroup
\begin{equation}
F(u)^{1}=\left\{ z\in F(u): z\gamma(z)=1 \right\}
\end{equation}
Then we have and isomorphism of $F$-algebras
$$C^+(B,\sigma)=F(u),$$
and the group $O^+(B,\sigma)$ is given by
\begin{equation}
O^+(A,\sigma)=F(u)^1,
\end{equation}
while the special Clifford group $\Gamma^+(B,\sigma)$ can be identified with
$$\Gamma^+(B,\sigma)=C^+(B,\sigma)^*=F(u)^*.$$
We have an exact sequence of groups
\begin{equation}
1\to F^*\to \Gamma^+(B,\sigma)\to O^+(B,\sigma)\to 1.
\end{equation}

\section{The groups $\Gspin$ and $\Spin$}
\subsection{The group of proper
similitudes}\label{ssec:group-proper-similit}
From now on we will exclusively work with quaternion algebras over a
field $F$. In other words $A$ has $\deg(A)=2$.
%\subsection{}
Let $(A,\sigma)$ be a twisted quadratic space over
a field $F$ with $A$ a quaternion algebra. We will assume that $F$
is not of characteristic two for simplicity. Then we have a group
$\GO^+(A,\sigma)$. In the following subsections we will construct a
group, which we will denote by $\Gspin(A,\sigma)$ which bears a
 relation to $\GO^+(A,\sigma)$ similar to the relation the usual Spin group bears to
the special orthogonal group.

Let $\Gm/F$ be the multiplicative group over $F$. If $K/F$ is a
finite extension we will write $R_{K/F}\Gm$ for torus obtained by
the restriction of scalars. Let $N_{K/F}:K\to F$ be the norm map. We
have an induced map $N_{K/F}:R_{K/F}\Gm\to \Gm$ and let
\begin{equation} {\Gm^1}_{K/F}={\rm Ker}(N_{K/F}:R_{K/F}\Gm\to\Gm)
\end{equation}
 be its kernel. This is a group scheme whose group of $F$-rational points,
 $${\Gm^1}_{K/F}(F)=\left\{x\in K^* | N_{K/F}(x)=1 \right\},$$
is the subgroup of $K^*$ consisting of norm one elements in $K$.
Similarly define the group scheme
\begin{equation}
\mu^1_{K/F}={\rm Ker}(N_{K/F}:R_{K/F}\mu_2\to \mu_2).
\end{equation}

\begin{proposition}\label{pro:goplus}
Let $(A,\sigma)$ be a twisted quadratic space over a field $F$ and
assume that $A$ is a quaternion algebra. Let $K=C^+(A,\sigma)$. Then
the group scheme of proper similitudes and the group scheme of
proper isometries are given by
\begin{eqnarray*}
\GO^+(A,\sigma)&=&R_{K/F}\Gm.\\
\O^+(A,\sigma)&=&{\Gm^1}_{K/F}
\end{eqnarray*}
\end{proposition}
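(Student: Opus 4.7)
My plan is to reduce the proposition to the computations already carried out in the Recurring Examples \ref{ssec:Recurri-Example}--\ref{ssec:Recurri-Example4} and then upgrade from $F$-points to group schemes. By Example~\ref{ssec:Recurri-Example} any orthogonal involution on $A$ has the form $\sigma=\int(u)\circ\gamma$ for a pure quaternion $u\in A^*$, and by Example~\ref{ssec:Recurri-Example4} there is a canonical $F$-algebra identification $K=C^+(A,\sigma)\cong F(u)\subset A$. This embeds $K^*$ in $A^*$ in a concrete way, which will be the candidate for $\GO^+(A,\sigma)$.

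Next I would compute the multiplier $\mu(g)=\sigma(g)g$ on $F(u)^*$. Since $F(u)$ is commutative, $u$ centralizes $F(u)$ and hence $\sigma$ restricts to $\gamma$ on $F(u)$. Consequently, for $g\in F(u)^*$,
$$\mu(g)=\gamma(g)\,g=\Nrd_A(g)=N_{K/F}(g)\in F^*,$$
so $F(u)^*\subset\GO(A,\sigma)$, and the equality $\Nrd_A(g)=\mu(g)=\mu(g)^m$ (with $m=\deg(A)/2=1$) places $F(u)^*$ inside $\GO^+(A,\sigma)$.

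For the reverse inclusion, by Example~\ref{ssec:Recurri-Example2} any similitude lies in $F(u)^*\cup F(u)^*v$, where $v\in A^*$ anti-commutes with $u$. A direct calculation using $\sigma(v)=u\gamma(v)u^{-1}=v$ (which follows from $\gamma(v)=-v$ and $uv=-vu$) gives, for $g=hv$ with $h\in F(u)^*$,
$$\mu(hv)=N_{K/F}(h)\cdot v^2,\qquad \Nrd_A(hv)=-N_{K/F}(h)\cdot v^2,$$
so $\Nrd_A(g)=-\mu(g)$ and $g\notin\GO^+(A,\sigma)$. This proves $\GO^+(A,\sigma)(F)=K^*$. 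The very same algebraic identities go through verbatim after tensoring by any commutative $F$-algebra $R$, yielding an equality of group-valued functors $\GO^+(A,\sigma)=R_{K/F}\Gm$. The statement about isometries then drops out: $\O^+(A,\sigma)=\ker(\mu)$ by definition, and under the identification above $\mu$ corresponds to $N_{K/F}\colon R_{K/F}\Gm\to\Gm$, whose kernel is ${\Gm^1}_{K/F}$.

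The one genuine subtlety is the split case. The recurring examples assume $A$ is a quaternion division algebra, whereas in general $F(u)$ may degenerate (for instance to $F\times F$ when $u^2\in F^{*2}$) and $v$ need not have the same sign behaviour. I expect no real obstruction: the identities above are polynomial in $g,u,v$ and persist under arbitrary base change, and one can always descend from the faithfully flat cover on which $A$ splits. Making sure the functorial description $\GO^+(A,\sigma)=R_{K/F}\Gm$ is insensitive to splitting—equivalently, that the embedding $K\hookrightarrow A$ and the norm computation are compatible with base change—is the main thing to pin down; everything else is the mechanical content of the recurring examples.
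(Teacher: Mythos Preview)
The paper does not actually supply its own proof of this proposition; it simply refers the reader to \cite[Example 12.25, page 164]{knus-book}. Your argument, by contrast, writes out a concrete calculation built on the Recurring Examples already reproduced in \S\ref{sec:Recolle-from-theory-Cliffor-algebra} (which are themselves taken from that reference), so in effect you are reconstructing what the citation points to. Your computations are correct: in particular, any invertible $v$ anti-commuting with $u$ is automatically a pure quaternion (writing $v=v_0+v'$ with $v_0\in F$ and $v'$ pure, the relation $uv=-vu$ forces $2v_0u=0$, hence $v_0=0$ since $\mathrm{char}\,F\neq 2$), so your assumption $\gamma(v)=-v$ is justified and the sign check $\Nrd(hv)=-\mu(hv)$ goes through. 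You are also right that the division hypothesis in the Recurring Examples is inessential here: the identities are polynomial in the entries and stable under base change to any $F$-algebra $R$, and the identification $C^+(A,\sigma)\cong F[u]\subset A$ holds regardless of whether $u^2\in F^{*2}$ (one gets $F\times F$ rather than a field, but $R_{K/F}\Gm$ still makes sense as a group scheme). So your route yields a genuine self-contained argument, whereas the paper is content to outsource it; the only cost is that you must carry the functor-of-points bookkeeping explicitly, which you have correctly outlined.
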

For a proof see \cite[Example 12.2.5, page 164]{knus-book}.

\subsection{Spin groups in the split
case}\label{ssec:Spin-groups-split-case} Let us recall a few
standard facts from the theory of spin groups. Let us assume that
$(V,q)$ be a quadratic space over $F$ with $q$ an isotropic form and
assume that $\dim(V)=2$.  In this case we have groups
$SO(q)=SO(2)=\Gm$. The Clifford algebra construction yields a spin
group as well. This situation is degenerate from the classical point
of view (because $SO(2)=\Gm$). We have ${\rm Spin}(2)=\Gm$ and we
also have an exact sequence of algebraic groups.
\begin{equation}
 1\to\mu_2\to {\rm Spin}(2)\to SO(2)\to 1
\end{equation}
This is none other than the Kummer sequence:
\begin{equation}
 1\to \mu_2\to\Gm\to\Gm\to1.
\end{equation}
We have in particular an exact sequence (a part of the Galois cohomology sequence for the Kummer sequence):
\begin{equation}
 1\to\mu(F)\to F^*\to F^*\to F^*/F^{*2}.
\end{equation}
The connecting homomorphism $F^*\to F^*/F^{*2}$ is the Spinor norm homomorphism $SO(2)(F)\to F^*/F^{*2}$.
\subsection{The group $\GSpin$}\label{ssec:GSpin}
We now want to describe a similar story for $\GO^+(B,\sigma)$ when $B$ is a quaternion algebra over $F$. For
notational simplicity, let $K=C^+(B,\sigma)$ be the Clifford algebra
associated to the twisted quadratic space $(B,\sigma)$. Then
$\dim_F(K)=2$. We have a canonical exact sequence of algebraic
groups (obtained by restriction of scalars):
\begin{equation}\label{res-scalars}
 1\to R_{K/F}\mu_2\to R_{K/F}\Gm\to R_{K/F}\Gm\to 1.
\end{equation}

Let $(B,\sigma)$ be a twisted quadratic space with $B$ a quaternion algebra. We define $\gspin(B,\sigma)$ as follows
\begin{equation}\label{eq:defngspin}
 \gspin(B,\sigma)=R_{K/F}\Gm.
\end{equation}
And we define the spin group $\spin(B,\sigma)$ by
\begin{equation}
\spin(B,\sigma)={\Gm^1}_{K/F}.
\end{equation}
 Thus we have the commutative diagram:
\begin{equation}
\xymatrix{
1 \ar[r] & R_{K/F}\mu_2 \ar@{=}[d] \ar[r] &R_{K/F}\Gm \ar@{=}[d]\ar[r] & R_{K/F}\Gm \ar@{=}[d]\ar[r] &1  \\
1 \ar[r] & R_{K/F}\mu_2 \ar[r] &\gspin(B,\sigma) \ar[r] &\go^+(B,\sigma) \ar[r] &1}
\end{equation}
And we have a commutative diagram of groups
\begin{equation}
 \xymatrix{
 1 \ar[r] & {\mu_2^1}_{K/F} \ar@{^{(}->}[d]  \ar[r] & \spin(B,\sigma) \ar@{^{(}->}[d]\ar[r] & \O^+(B,\sigma) \ar@{^{(}->}[d]\ar[r] &1  \\
1 \ar[r] & R_{K/F}\mu_2 \ar[r] &\gspin(B,\sigma) \ar[r]
&\go^+(B,\sigma) \ar[r] &1}
\end{equation}

Thus we have proved that
\begin{proposition}
Let $F$ be a field and $(B,\sigma)$ be a twisted quadratic space
over $F$ with $B$ a quaternion algebra over $F$. Let $K=C^+(B,\sigma)$ be the (even) Clifford algebra of $(B,\sigma)$.
Then there is a canonical isogeny $\gspin(B,\sigma)
\to\GO^+(B,\sigma)$ with kernel $R_{K/F}\mu_2$. We have a
commutative diagram of algebraic groups:
\begin{equation}
 \xymatrix{
 1 \ar[r] & {\mu_2^1}_{K/F} \ar@{^{(}->}[d]  \ar[r] & \spin(B,\sigma) \ar@{^{(}->}[d]\ar[r] & \O^+(B,\sigma) \ar@{^{(}->}[d]\ar[r] &1  \\
1 \ar[r] & R_{K/F}\mu_2 \ar[r] &\gspin(B,\sigma) \ar[r]
&\go^+(B,\sigma) \ar[r] &1}
\end{equation}
\end{proposition}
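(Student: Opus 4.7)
The plan is to assemble the identifications already in place. By the definition \eqref{eq:defngspin} we have $\gspin(B,\sigma)=R_{K/F}\Gm$ and $\spin(B,\sigma)={\Gm^1}_{K/F}$; by Proposition \ref{pro:goplus} we have the matching identifications $\GO^+(B,\sigma)=R_{K/F}\Gm$ and $\O^+(B,\sigma)={\Gm^1}_{K/F}$. Under these, the canonical morphism $\gspin(B,\sigma)\to\GO^+(B,\sigma)$ will be the squaring map on $R_{K/F}\Gm$, obtained by applying the restriction of scalars functor $R_{K/F}(-)$ to the Kummer sequence
\begin{equation*}
1\to\mu_2\to\Gm\xrightarrow{(-)^2}\Gm\to 1
\end{equation*}
over the \'etale quadratic $F$-algebra $K$. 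Since $K/F$ is \'etale, $R_{K/F}$ is exact, and the resulting sequence
\begin{equation*}
1\to R_{K/F}\mu_2\to R_{K/F}\Gm\to R_{K/F}\Gm\to 1
\end{equation*}
is short exact; this is the bottom row of the diagram, exhibiting the claimed isogeny with kernel $R_{K/F}\mu_2$.

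For the top row I would restrict to norm-one subgroups. Because $N_{K/F}(x^2)=N_{K/F}(x)^2$, squaring carries ${\Gm^1}_{K/F}$ to itself and $R_{K/F}\mu_2$ into ${\mu_2^1}_{K/F}$, giving the induced map $\spin(B,\sigma)\to\O^+(B,\sigma)$ whose kernel is ${\mu_2^1}_{K/F}$ by construction. Exactness on the right is inherited from the Kummer sequence as a short exact sequence of fppf sheaves (equivalently, from the divisibility of ${\Gm^1}_{K/F}$ as an algebraic group). Commutativity of the square joining the two rows, and of the vertical inclusions, is automatic from the functoriality of the norm map and of restriction of scalars.

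The point to notice---rather than any serious obstacle---is that the statement is essentially a packaging of the \emph{definition} of $\gspin(B,\sigma)$ together with Proposition \ref{pro:goplus}: once both $\gspin(B,\sigma)$ and $\GO^+(B,\sigma)$ are recognized as the same two-dimensional torus $R_{K/F}\Gm$, the ``spin--orthogonal'' isogeny reduces to the Kummer double cover of this torus. The only quaternion-algebra input beyond the definition is the requirement that $\dim_F K=2$ and that $K$ is \'etale, both of which hold because $(B,\sigma)$ is a twisted quadratic space of degree $2$ and $\mathrm{char}(F)\neq 2$.
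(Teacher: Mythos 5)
Your proposal is correct and follows the paper's own route: the paper likewise defines $\gspin(B,\sigma)=R_{K/F}\Gm$ and $\spin(B,\sigma)={\Gm^1}_{K/F}$, invokes Proposition~\ref{pro:goplus} for $\GO^+$ and $\O^+$, obtains the bottom row by applying $R_{K/F}$ to the Kummer sequence, and gets the top row by restricting to the norm-one subtori. You add a few words of justification (exactness of $R_{K/F}$ for étale $K/F$, surjectivity on norm-one subgroups) that the paper leaves implicit, but the argument is the same.
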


\section{Spin structures on an elliptic curve}\label{sec:Spin-structu-ellipti-curve}
\subsection{Elliptic curves of spinorial type}\label{ssec:Ellipti-curves-spinori-type}
Let $k$ be a field and let ${\bar k}$ be its algebraic closure. Let
$E/k$ be an elliptic curve. We will write $\End(E)=\Hom_k(E,E)$ for
the $\Q$-algebra endomorphisms of $E$ defined over $k$. We will say
that $E$ is of \textit{spinorial type} if the $\Q$-algebra
$\End_{k}(E)$ admits a non-trivial involution
of the first kind.

\begin{proposition}\label{pro:EhasSpin}
Let $E/k$ be an elliptic curve over a field $k$. Consider the
following assertions:
\begin{enumerate}
\item $E$ is of spinorial type.
\item $\End_{k}(E)=\End_{\bar k}(E)$ is a quaternion algebra.
\item $E$ is supersingular.
\item The characteristic of $k$ is $p>0$.
\end{enumerate}
Then we have $(1)\Leftrightarrow(2)\Rightarrow(3)\Rightarrow(4)$.
\end{proposition}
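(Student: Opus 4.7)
The proof rests on the classical classification of the $\Q$-algebra $\End_k(E)$ for an elliptic curve: it is either $\Q$, an imaginary quadratic field $K$, or a quaternion algebra, with the last case occurring only for supersingular elliptic curves in positive characteristic (Deuring). Having this classification at hand, each implication is short.

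For $(2)\Rightarrow(1)$: if $B=\End_k(E)$ is a quaternion algebra, then by the recurring example \ref{ssec:Recurri-Example} (or by \cite{knus-book}) it carries the canonical symplectic involution $\gamma$, which is a non-trivial involution of the first kind. So $E$ is of spinorial type.

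For $(1)\Rightarrow(2)$: by the classification, the only alternatives to $B$ being a quaternion algebra are $\End_k(E)=\Q$ or an imaginary quadratic field. In both of those cases the algebra is commutative and equal to its own center, so any involution of the first kind (being the identity on the center) is forced to be the identity on the whole algebra, i.e.\ trivial. Hence the existence of a non-trivial involution of the first kind forces $\End_k(E)$ to be a quaternion algebra. Since $\End_k(E)\subseteq\End_{\bar k}(E)$ and the latter has $\Q$-dimension at most four, equality $\End_k(E)=\End_{\bar k}(E)$ follows automatically.

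For $(2)\Rightarrow(3)$: by the classification, the endomorphism algebra (over $\bar k$) of an elliptic curve can only be four-dimensional over $\Q$ when the curve is supersingular. For $(3)\Rightarrow(4)$: supersingular elliptic curves exist only in positive characteristic (an elliptic curve in characteristic zero has $\End(E)\otimes\Q$ equal to $\Q$ or an imaginary quadratic field).

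The argument is essentially an unpacking of the classification of endomorphism algebras, so the only potential ``obstacle'' is invoking that classification cleanly; the central content is the observation that commutative endomorphism algebras (equal to their own center) admit no involution of the first kind other than the identity, which is what makes the quaternion case the unique source of spinorial structure.
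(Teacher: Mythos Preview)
Your proof is correct and follows essentially the same route as the paper: both invoke the trichotomy $\Q$/imaginary quadratic/quaternion for $\End_k(E)\otimes\Q$, observe that the first two options are commutative (hence admit no non-trivial first-kind involution), and cite the existence of a non-trivial first-kind involution on any quaternion algebra for the converse. You additionally justify the equality $\End_k(E)=\End_{\bar k}(E)$ by a dimension count, a point the paper leaves implicit.
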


\begin{proof}
Clearly the assertions $(2)\Rightarrow(3)\Rightarrow(4)$ are
well-known. So we need to prove the equivalence
$(1)\Leftrightarrow(2)$.

Let $B=\End_{k}(E)$.  We prove $(1)\Rightarrow(2)$. By the
classification of the endomorphism algebras of $E$ there are three
possibilities: $B$ is either (a) the field of rational numbers (b)
an imaginary quadratic field or (c) a quaternion algebra. If $B=\Q$,
then $B$ does not admit any non-trivial involutions. Similarly if
$B$ is an imaginary quadratic field then $B$ does not admit any
non-trivial involutions of the first kind (\textit{i.e.},
involutions trivial on its center). Hence $B$ cannot be the field of
rational numbers or an imaginary quadratic field. So the hypothesis
$(1)$ implies that $B$ is a quaternion algebra.

Now to prove $(2)\Rightarrow(1)$ we use \cite[Corollary~2.8 (page 18) and Proposition~2.21 (page 26)]{knus-book}
which says that any quaternion algebra
$B/\Q$ admits non-trivial involutions of the first kind. So $E$ is
of spinorial type. This completes the proof.
\end{proof}

\subsection{Elliptic curves of spinorial type over finite fields}
%By Proposition~\ref{pro:EhasSpin} we see that any elliptic curve of
%spinorial type is definable over $\F_{p^2}$.
%So
From now on we will study elliptic curves of spinorial type over a
finite field $\F_q$ with $q=p^n$ elements. Before proceeding further
we recall the classification up to isogeny of elliptic curves over a
finite field $\F_q$ (see \cite{deuring41} or
\cite[Theorem~4.1]{waterhouse69}).
\begin{theorem}\label{waterhouse}
Let $\F_q$ be a field with $q=p^a$ elements. The set of isogeny classes of
elliptic curves over $\F_q$ are in bijection with a certain set of integers, denoted  $I_q$,
contained in the interval $[-2\sqrt{q},2\sqrt{q}]$. An integer
$\beta\in[-2\sqrt{q},2\sqrt{q}]$ is in $I_q$ if and only if:
\begin{enumerate}
 \item $(\beta,p)=1$; or
 \item $p|\beta$ and we are in  any of the following subcases:
 \begin{enumerate}
 \item $a$ is even and $\beta=\pm2\sqrt{q}$;
 \item $a$ is even and $p\not\equiv 1\mod{3}$ and $\beta=\pm\sqrt{q}$;
 \item $a$ is odd and $p=2,3$ and $\beta=\pm p^{\frac{a+1}{2}}$;
 \item $a$ is odd and $\beta=0$;
 \item $a$ is even and $p\not\congruent 1\mod{4}$ and $\beta=0$.
 \end{enumerate}
\end{enumerate}
In case $(1)$  the associated isogeny class consists of ordinary
elliptic curves. Otherwise the associated isogeny class consists of
supersingular elliptic curves. In all cases, except in the case
$2(a)$, the endomorphism algebra $\End_{\F_q}(E)$ of any elliptic
curve $E$ in the isogeny class associated to $\beta$ is an imaginary
quadratic field. If we are in the exceptional case $2(a)$ then the
endomorphism algebra is the unique quaternion algebra over $\Q$
which is ramified at $p$ and $\infty$.
\end{theorem}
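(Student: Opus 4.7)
The approach is to invoke the Honda--Tate theorem: isogeny classes of simple abelian varieties over $\F_q$ correspond bijectively to Galois conjugacy classes of Weil $q$-numbers $\pi$ (algebraic integers whose images under every complex embedding have absolute value $\sqrt{q}$), and the variety has dimension one precisely when $\pi$ satisfies an integral quadratic polynomial, namely its characteristic polynomial $T^2-\beta T+q$ where $\beta=\pi+\bar\pi=q+1-\#E(\F_q)\in\Z$. Thus classifying isogeny classes reduces to identifying those integers $\beta$ that actually occur as the trace of Frobenius of an elliptic curve, and then computing $\End(E)\tensor\Q$ via the local invariants attached to $\pi$ by Honda--Tate.

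First I would record the Weil bound $|\beta|\le 2\sqrt{q}$, which is immediate from $\beta=\pi+\bar\pi$ with $|\pi|=|\bar\pi|=\sqrt{q}$. In the ordinary case $(\beta,p)=1$, every such $\beta$ in the interval occurs: by Deuring's lifting theorem one produces an elliptic curve in characteristic zero with CM by an order in $\Q(\pi)$ having $\pi$ as Frobenius of its good reduction at a prime above $p$; equivalently the Honda--Tate compatibility at $p$ is automatic since the two roots of $T^2-\beta T+q$ have $p$-adic valuations $0$ and $a$. Here $\Q(\pi)$ is imaginary quadratic (as $\beta^2-4q<0$), and Honda--Tate gives $\End_{\F_q}(E)\tensor\Q=\Q(\pi)$.

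The core of the proof is the supersingular case $p\mid\beta$, where the Newton polygon forces both roots of $T^2-\beta T+q$ to have $p$-adic valuation $a/2$. I would enumerate the subcases by analyzing when an admissible Weil $q$-number $\pi$ exists: the condition $\pi\in\Q$ (forcing $a$ even and $\beta=\pm 2\sqrt{q}$) yields $(2)(a)$; the case $\pi\not\in\Q$ reduces to asking when the imaginary quadratic field $\Q(\sqrt{\beta^2-4q})$ has the correct splitting behavior at $p$ so that the prescribed valuations are realized, together with the requirement that $T^2-\beta T+q$ be integral with roots of the given $p$-adic valuations. A standard splitting/ramification analysis via quadratic reciprocity then pins down the congruences on $p$ modulo $3$ and $4$ in $(2)(b)$ and $(2)(e)$, the parity of $a$ together with $p\in\{2,3\}$ in $(2)(c)$ (the exceptional primes for which $p^{(a+1)/2}$ still lies in the Weil interval), and the remaining case $\beta=0$, $a$ odd in $(2)(d)$.

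Finally, to identify $\End_{\F_q}(E)\tensor\Q$: if $\pi\not\in\Q$ this algebra is simply $\Q(\pi)$ by Honda--Tate, hence the imaginary quadratic field asserted in all non-exceptional cases. In the exceptional case $(2)(a)$ we have $\pi=\pm\sqrt{q}\in\Z$, so Honda--Tate instead produces a central division algebra over $\Q(\pi)=\Q$ whose local Hasse invariants vanish outside $\{p,\infty\}$ and equal $1/2$ at those two places; this is exactly the unique quaternion algebra over $\Q$ ramified at $p$ and $\infty$. The main obstacle is the exhaustive case analysis in the supersingular regime, which is the substance of \cite[Theorem~4.1]{waterhouse69}; since the statement is classical we may invoke that reference directly for the verification that the listed $\beta$ are precisely the ones occurring.
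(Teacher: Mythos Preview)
The paper does not give a proof of this theorem at all: it is stated as a recalled result, with the words ``we recall the classification up to isogeny of elliptic curves over a finite field $\F_q$ (see \cite{deuring41} or \cite[Theorem~4.1]{waterhouse69})'' immediately preceding it, and no argument following it. So there is no paper-proof to compare against.

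Your sketch via Honda--Tate is a correct outline of how the result is established (and is essentially the approach of \cite{waterhouse69}); since you also end by invoking \cite[Theorem~4.1]{waterhouse69} directly, your proposal is in fact \emph{more} than what the paper does, and is consistent with it.
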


\begin{proposition}\label{pro:spinorial}
Let $E/\F_q$ be a supersingular elliptic curve. Then $E\times_{\F_q}\F_{q^2}$ is of spinorial
type.
\end{proposition}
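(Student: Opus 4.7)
The plan is to apply Proposition~\ref{pro:EhasSpin}: since $E$ is supersingular, the geometric endomorphism algebra $B := \End_{\bar\F_p}(E)\otimes\Q$ is, by the classification of endomorphism algebras of elliptic curves (encoded in the exceptional case 2(a) of Theorem~\ref{waterhouse}), the unique quaternion algebra over $\Q$ ramified at $p$ and $\infty$. By the equivalence $(1)\Leftrightarrow(2)$ of that proposition, it therefore suffices to show that every geometric endomorphism of $E$ already descends to $\F_{q^2}$, i.e.\ that $\End_{\F_{q^2}}(E\times_{\F_q}\F_{q^2})\otimes\Q = B$.

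The first step is to identify $\F_{q^n}$-rational endomorphisms as a centralizer inside $B$. Since $\gal(\bar\F_p/\F_q)\cong\widehat{\Z}$ acts on $B$ by $\Q$-algebra automorphisms, and every such automorphism of a central simple algebra is inner by Skolem--Noether, the topological Frobenius acts by conjugation by the Frobenius endomorphism $\pi\in B^{*}$. Consequently
\[
\End_{\F_{q^n}}(E\times_{\F_q}\F_{q^n})\otimes\Q \;=\; C_B(\pi^n),
\]
and the task reduces to showing $\pi^2\in Z(B)=\Q$.

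The second step is a case-by-case verification based on Theorem~\ref{waterhouse}. Writing $\pi=\sqrt{q}\,\zeta$ for a root of unity $\zeta$ (apply Kronecker's theorem to $\pi/\sqrt{q}$, whose archimedean absolute values are all $1$ in view of $\pi\bar\pi=q$), the problem becomes $\zeta^{2}\in\Q$. Case 2(a) gives $\zeta=\pm 1$ directly, while the vanishing of the trace $\beta=0$ in cases 2(d) and 2(e) forces $\zeta=\pm i$; in either situation $\pi^{2}=\pm q\in\Q$, and the result follows for these isogeny classes by the centralizer computation above.

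The main obstacle is the remaining supersingular cases 2(b) and 2(c), where $\zeta$ is a priori a primitive sixth or eighth root of unity, so $\pi^{2}\notin\Q$ and the naive centralizer gives only the imaginary quadratic subfield $\Q(\zeta^{2})\subset B$. Closing the argument here is the real technical content of the statement: one would need to invoke a sharper rationality theorem of Deuring--Waterhouse type, controlling the field of definition of geometric endomorphisms of a supersingular elliptic curve directly (rather than through the conjugation action of $\pi$), to upgrade $C_B(\pi^{2})$ to all of $B$ after the quadratic base change.
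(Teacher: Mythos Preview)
Your centralizer analysis is correct and more careful than the paper's own proof, which simply asserts that $E' = E \times_{\F_q} \F_{q^2}$ ``has all its endomorphisms defined over $\F_{q^2}$'' and invokes Proposition~\ref{pro:EhasSpin}. As you observe, that assertion is equivalent to $\pi^2 \in Z(B) = \Q$, and you have correctly flagged that this fails in Waterhouse's cases 2(b) and 2(c).

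The gap you identify, however, cannot be closed: there is no ``sharper rationality theorem'' upgrading $C_B(\pi^2)$ to $B$, because the Proposition as stated is \emph{false} in those cases. For a counterexample take $p = 5$, $q = 25$, and an $E/\F_{25}$ in the isogeny class with $\beta = 5$ (case 2(b), valid since $5 \not\equiv 1 \pmod 3$). The Frobenius eigenvalues are $5\zeta$ and $5\bar\zeta$ with $\zeta$ a primitive sixth root of unity, so $\pi^2$ has the distinct eigenvalues $25\zeta^{\pm 2}$; thus $\pi^2 \notin \Q$ and $\End_{\F_{q^2}}(E') \otimes \Q = C_B(\pi^2) = \Q(\pi^2) = \Q(\sqrt{-3})$ is only imaginary quadratic. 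One must pass to $\F_{q^3}$, not $\F_{q^2}$, to obtain the full quaternion algebra. Similarly, case 2(c) with $p = 2$, $q = 2$, $\beta = 2$ gives $\pi = 1+i$ and $\pi^2 = 2i \notin \Q$. So your instinct that cases 2(b) and 2(c) present a genuine obstruction is right, and the obstruction is fatal: the paper's one-line proof glosses over an error in the statement itself.
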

\begin{proof}
If $E$ is a supersingular elliptic curve defined over $\F_q$ then $E'=E\times_{\F_q}\F_{q^2}$ is
a supersingular elliptic curve over $\F_{q^2}$ with all its endomorphisms defined over
$\F_{q^2}$. Hence $E'$ is of spinorial type by the previous proposition.
\end{proof}

We end the subsection with the following consequence of
Proposition~\ref{waterhouse} and Proposition~\ref{pro:EhasSpin}.

\begin{proposition}\label{properties-of-spinorial}
Let $E/\F_q$ be an elliptic curve of spinorial type. Then
\begin{enumerate}
\item $E$ is supersingular,
\item The two eigenvalues of the Frobenius endomorphism of  $E$ are equal;
\item Frobenius endomorphism $\tau_E:E\to E$ is in the center of $B=\End_{\F_q}(E)$.
\end{enumerate}
\end{proposition}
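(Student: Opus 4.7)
The plan is to deduce all three statements from the two big inputs available in the excerpt: Proposition~\ref{pro:EhasSpin}, which identifies spinorial type with the endomorphism algebra being a quaternion algebra, and Theorem~\ref{waterhouse}, which pins down exactly when the endomorphism algebra of an elliptic curve over $\F_q$ is a quaternion algebra.

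First, for $(1)$: Proposition~\ref{pro:EhasSpin} shows that $E$ of spinorial type forces $\End_{\F_q}(E) = \End_{\overline k}(E)$ to be a quaternion algebra, and that in turn forces $E$ to be supersingular. This is immediate.

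Second, I would combine supersingularity with Theorem~\ref{waterhouse} to pin down the Frobenius trace. The theorem says that, with one exception, the endomorphism algebra of an elliptic curve over $\F_q$ is at most an imaginary quadratic field (which has no involutions of the first kind beyond identity), so the only way for $\End_{\F_q}(E)$ to be the quaternion algebra is to be in case $2(a)$: $q = p^a$ with $a$ even and trace of Frobenius $\beta = \pm 2\sqrt{q}$. The reduced characteristic polynomial of $\tau_E$ in $B$ is therefore
\begin{equation*}
T^2 - \beta T + q = (T \mp \sqrt{q})^2,
\end{equation*}
so the two eigenvalues of Frobenius (i.e.\ the roots of this polynomial, which are the eigenvalues on $\ell$-adic Tate module) are both equal to $\pm\sqrt{q}$. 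This gives $(2)$.

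Third, for $(3)$: since $B$ is the quaternion algebra over $\Q$ ramified at $p$ and $\infty$, it is a division algebra, so it contains no nonzero nilpotent elements. From $(\tau_E \mp \sqrt{q})^2 = 0$ in $B$, we conclude $\tau_E = \pm \sqrt{q} \in \Q \subset Z(B)$, so $\tau_E$ is central. Note $\sqrt q$ is an integer because $a$ is even, so this makes sense in $B \subset \End(E) \otimes \Q$.

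The only real content is getting from ``spinorial $\Rightarrow$ quaternion endomorphism algebra'' to the exceptional case $2(a)$ of Theorem~\ref{waterhouse}; the rest is formal. The mild obstacle is just being careful that the two ``eigenvalues'' in $(2)$ refer to the roots of the reduced characteristic polynomial (equivalently, the eigenvalues on $T_\ell E$), which must coincide here because the polynomial $(T \mp \sqrt{q})^2$ has a repeated root over $\Q$.
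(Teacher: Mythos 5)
Your proof is correct and takes essentially the same route as the paper, which simply states the result follows immediately from the Deuring/Waterhouse classification (Theorem~\ref{waterhouse}); you have usefully unpacked that citation by explicitly identifying case $2(a)$ as the only quaternionic case, noting the resulting double root of the characteristic polynomial, and observing that the absence of nilpotents in the division algebra $B$ forces $\tau_E = \pm\sqrt{q} \in \Q$.
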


\begin{proof}
This is immediate from Deuring's classification of supersingular
elliptic curves (see Theorem~\ref{waterhouse}).
\end{proof}

\subsection{Spin structures on an elliptic curve}
Let $E/\F_q$ be an elliptic curve over a finite field $\F_q$. Assume
that $E$ is an elliptic curve of spinorial type. Let $B=\End(E)$ be
the endomorphism algebra of $E$. A \textit{spin structure} on $E$ is
a choice of an involution $\sigma:B\to B$ of first kind and
orthogonal type. Equivalently, a spin structure on an elliptic curve
is a choice of twisted quadratic space structure $(B,\sigma)$  on
$B$ where $B=\End(E)$ and $\sigma:B\to B$ is an involution of the
first kind and orthogonal type.

\begin{proposition}\label{frobeniussimilitude}
Let $E/\F_q$ be an elliptic curve and suppose $E$ is an elliptic
curve with a spin structure $(B,\sigma)$. Then the Frobenius
endomorphism $\tau_E:E\to E$ induces a similitude of $(B,\sigma)$.
\end{proposition}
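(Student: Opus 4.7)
The plan is to reduce the verification of the similitude condition to a short computation using the fact, established in Proposition~\ref{properties-of-spinorial}, that Frobenius is central in $B=\End_{\F_q}(E)$.

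First I would invoke Proposition~\ref{properties-of-spinorial}(3) to place $\tau_E$ in the center $Z(B)$ of the quaternion algebra $B$. Since $B$ is central over $\Q$, this gives $\tau_E\in \Q\subset B$. (One can also see this concretely: by Proposition~\ref{properties-of-spinorial}(2) the two Frobenius eigenvalues coincide, so each equals $\pm\sqrt{q}$; since $E$ is of spinorial type it falls into case $2(a)$ of Theorem~\ref{waterhouse}, so $q=p^{2m}$ and $\tau_E=\pm p^{m}\in \Q^{*}$.)

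Next I would appeal to condition (4) in the definition of an involution of the first kind (\S\ref{ssec:Constru-involut}): $\sigma$ fixes the center of $B$ pointwise. Combined with the previous step this yields $\sigma(\tau_E)=\tau_E$. Consequently
\[
\mu(\tau_E)\;=\;\sigma(\tau_E)\,\tau_E\;=\;\tau_E^{2}\;=\;q\;\in\;\Q^{*},
\]
which is precisely the condition in \S\ref{ssec:Similit-group-similit} for $\tau_E$ to be a similitude of $(B,\sigma)$, with multiplier $\mu(\tau_E)=q$.

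There is no real obstacle here: once centrality of $\tau_E$ is noted, everything follows from the definition of ``involution of the first kind'' and of ``similitude.'' The only point worth flagging for later use is that the multiplier is exactly $q$, which is what will make the similitude representation land in $\GO^{+}(B,\sigma)$ with the correct scaling behaviour needed for the weight computation in subsequent sections.
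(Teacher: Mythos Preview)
Your proposal is correct and follows essentially the same route as the paper: use centrality of $\tau_E$ in $B$ (hence $\tau_E\in\Q$), note that an involution of the first kind fixes the center, and conclude $\sigma(\tau_E)\tau_E=\tau_E^2=q\in\Q^*$. The only difference is that the paper's proof goes on to verify that $\tau_E$ is in fact a \emph{proper} similitude by checking $\Nrd(\tau_E)=\mu(\tau_E)=q$; you allude to this in your closing remark about landing in $\GO^+(B,\sigma)$ but do not carry out the check, which is fine since the stated proposition only claims ``similitude.''
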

\begin{proof}
Let $E$ be an elliptic curve with a spin structure $(B,\sigma)$.
Then the Frobenius endomorphism $\tau_E:E\to E$ is in the center of
$B$. The center of $B$ is $\Q$. We have to prove that
$\sigma(\tau)\tau\in\Q$. But $\tau$ is in the center of $B$, so
$\sigma(\tau)=\tau$ and so $\sigma(\tau)\tau=\tau^2\in \Q$ as
$\tau\in\Q$. So $\tau$ is a similitude of $(B,\sigma)$. To prove
that $\tau$ is a proper similitude we have to prove that the
multiplier $\mu(\tau)=\sigma(\tau)\tau$ of the similitude $\tau$
satisfies,
$$\mu(\tau)=\sigma(\tau)\tau=\Nrd(\tau)=q.$$ The last equality
follows from the fact that $\tau$ operates by $\pm\sqrt{q}$ (or by
the following proposition, see \cite[Page 82]{mumford-abelian}). This
proves the assertion.
\end{proof}

\begin{proposition}
Let $E/\F_q$ be an elliptic curve of spinorial type. Let $\tau\in
\End(E)$ be the Frobenius endomorphism of $E$. Then we have
\begin{enumerate}
\item The reduced trace of $\tau$ is $\pm2\sqrt{q}$,
\item the reduced norm of $\tau$ is $q$.
\end{enumerate}
In particular, the reduced norm is a square.
\end{proposition}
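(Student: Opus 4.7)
The plan is to reduce both assertions to the observation that $\tau$ is a rational scalar sitting in the quaternion algebra $B = \End(E)$, whereupon the reduced trace and reduced norm become trivial to compute.

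First I would invoke Proposition~\ref{properties-of-spinorial}: the Frobenius $\tau$ is central in $B$, and since $B$ is a central simple $\Q$-algebra (case~2(a) of Theorem~\ref{waterhouse}) its center is $\Q$, so $\tau \in \Q$. The same proposition tells us that the two eigenvalues of $\tau$ coincide. Since these eigenvalues are the roots of the Weil characteristic polynomial $T^2 - tT + q$ of Frobenius, their common value $\tau$ must satisfy $\tau^2 = q$, forcing $\tau = \pm\sqrt{q}$. (This forces $q$ to be a perfect square, which is automatic since spinorial type puts us in case~2(a) of Theorem~\ref{waterhouse}, where $a$ is even.)

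Next I would appeal to the explicit formulas recorded in the Recurring Example~\ref{ssec:Recurri-Example}: the canonical symplectic involution $\gamma$ on $B$ fixes the scalar part and negates the pure-quaternion part, so for a scalar element $z = x_0 \in \Q \subset B$ one has $\Trd(z) = z + \gamma(z) = 2x_0$ and $\Nrd(z) = z\gamma(z) = x_0^2$. Substituting $z = \tau = \pm\sqrt{q}$ immediately yields $\Trd(\tau) = \pm 2\sqrt{q}$ and $\Nrd(\tau) = q$, which is visibly a square in $\Q$.

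There is essentially no technical obstacle here: once centrality of $\tau$ is used to collapse $\tau$ to a rational number, everything is formal. The only point worth making explicit in the write-up is that \emph{both} ingredients from Proposition~\ref{properties-of-spinorial} (centrality of $\tau$ and equality of its two eigenvalues) are needed to pin $\tau$ down to $\pm\sqrt{q}$, rather than merely to a root of its Weil polynomial. One may alternatively quote \cite[p.~82]{mumford-abelian}, already used in Proposition~\ref{frobeniussimilitude}, for the fact that $\tau$ acts as $\pm\sqrt{q}$; but the argument above makes the dependence on the spinorial hypothesis transparent.
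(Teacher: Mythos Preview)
Your argument is correct and matches the paper's intent. The paper actually gives no separate proof for this proposition; it relies implicitly on the observation made in the proof of Proposition~\ref{frobeniussimilitude} that $\tau$ operates as $\pm\sqrt{q}$ (with a pointer to \cite[p.~82]{mumford-abelian}), from which the reduced trace and norm follow exactly as you compute using the scalar formulas of \ref{ssec:Recurri-Example}.
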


\begin{proposition}\label{clifford-classes}
Let $(B,\sigma)/\Q$ be a twisted quadratic space with $B$ a quaternion algebra over $\Q$
which is ramified at $\infty$.  Let $K=C^+(B,\sigma)$ be the associated even Clifford algebra.
Then $K/\Q$ is an imaginary quadratic extension of $\Q$.
\end{proposition}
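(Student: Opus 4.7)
The plan is to reduce everything to the explicit description of $C^+(B,\sigma)$ already recorded in Corollary~\ref{center-and-disc2}, and then to check two things about the discriminant $\delta_\sigma\in\Q^*/\Q^{*2}$: that it is non-trivial (so that $K$ is genuinely a quadratic extension) and that it is negative (so that the extension is imaginary). The hypothesis ``$B$ ramified at infinity'' is exactly what will force both.

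First I would invoke Corollary~\ref{center-and-disc2}: since $B/\Q$ is a quaternion algebra and $\sigma$ is an orthogonal involution of the first kind, the even Clifford algebra is commutative and equals
\begin{equation*}
K = C^+(B,\sigma) = \Q[X]/(X^2 - \delta_\sigma),
\end{equation*}
where $\delta_\sigma\in\Q^*$ is any representative of $\disc(\sigma)\in\Q^*/\Q^{*2}$. So it suffices to pin down the class of $\delta_\sigma$ modulo squares and real signs.

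Next, I would show $\delta_\sigma\notin \Q^{*2}$. Ramification at $\infty$ means $B\otimes_\Q\R$ is Hamilton's quaternion algebra, which is a division algebra; in particular $B$ itself is a division algebra over $\Q$. By the fact recalled at the end of \ref{sssec:disc-general-case} (quoting \cite[Example~7.4]{knus-book}), a quaternion division algebra admits no orthogonal involution of trivial discriminant, so $\delta_\sigma\not\equiv 1\pmod{\Q^{*2}}$. Hence $X^2-\delta_\sigma$ is irreducible over $\Q$ and $K/\Q$ is a genuine quadratic extension.

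Now for the sign. Using the recurring example \ref{ssec:Recurri-Example}, write $\sigma = \int(u)\circ\gamma$ for some pure quaternion $u\in B^*$ with $\gamma(u)=-u$. By the formula in \ref{sssec:disc-general-case} we may take
\begin{equation*}
\delta_\sigma = -\Nrd_B(u).
\end{equation*}
Since $B$ is ramified at $\infty$, the reduced norm form on $B\otimes_\Q\R\cong\mathbb{H}$ is positive definite. The element $u$ is a nonzero element of $B$ (it is invertible), so its image in $B\otimes\R$ is nonzero, whence $\Nrd_B(u)>0$ in $\R$. Therefore $\delta_\sigma<0$, and this sign is preserved under multiplication by elements of $\Q^{*2}$, so the class of $\delta_\sigma$ in $\Q^*/\Q^{*2}$ is represented by a negative rational. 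Combining this with the previous step, $K=\Q(\sqrt{\delta_\sigma})$ is an imaginary quadratic extension of $\Q$.

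The main obstacle is essentially bookkeeping: making sure the sign and square-class statements are consistent with the choice of representative $u$ of $\sigma$. This is harmless because any two choices of $u$ differ by a scalar in $\Q^*$ (by \cite[Proposition~2.21]{knus-book}), and rescaling multiplies $\Nrd_B(u)$ by a positive square, preserving both $\delta_\sigma\bmod\Q^{*2}$ and the sign of $\delta_\sigma$. Everything else is a direct application of results already in the paper.
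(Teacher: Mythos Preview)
Your proof is correct and follows the same route as the paper's: write $\sigma=\int(u)\circ\gamma$, compute $\disc(\sigma)=-\Nrd(u)$, and use ramification at $\infty$ to conclude $\Nrd(u)>0$ (the paper does this last step via an explicit presentation $B=\hasse{a}{b}{}$ with $a,b<0$ and the formula $\Nrd(u)=-au_1^2-bu_2^2+abu_3^2$, rather than invoking $B\otimes_\Q\R\cong\mathbb{H}$ abstractly). Your separate verification that $\delta_\sigma\notin\Q^{*2}$ is harmless but redundant once $\delta_\sigma<0$ is established.
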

\begin{proof}
The algebra $B$ is  given by its symbol $B=\hasse{a}{b}{}$. Since $B$ is ramified at $\infty$,
we see that the Hilbert symbol $(a,b)_\infty=-1$ and this means that $a<0$ and $b<0$.
Now let $\sigma$ be an orthogonal involution on $B$ and let $\gamma$ be the canonical symplectic involution of $B$.
Then $\sigma$ is of the form $\sigma=\int(u)\circ\gamma$ for some $u\in B$ with $\gamma(u)=-u$.
Thus $u$ is a pure quaternion and $\disc(\sigma)=-\Nrd(u)$. But a simple calculation shows that
$$\Nrd(u)=-au_1^2-bu_2^2+abu_3^2,$$
where $u=iu_1+ju_2+ku_3$ and $i^2=a,j^2=b,ij=-ji=k$. Since $a<0$ and $b<0$,
we see that the quadratic form $(-a,-b,ab)$ is positive definite. Hence $\Nrd(u)>0$ and
in particular $\disc(\sigma)=-\Nrd(u)<0$. By Theorem~\ref{center-and-disc2}
we know that in this case $K=C^+(B,\sigma)=\Q[x]/(x^2-\disc(\sigma))$ and as $\disc(\sigma)<0$, $K$
is an imaginary quadratic extension of $\Q$. This completes the proof.
\end{proof}

\section{Spinorial representation of the Weil group}
\subsection{Weil groups}
Let $E/\F_q$ be an elliptic curve. Let $W(\bF_q/\F_q)\subset
\gal(\bF_q/\F_q)$ be the Weil group of $\F_q$. It is standard that
$\Z\isom W(\bF_q/\F_q)$ and we  choose the isomorphism given by $1\mapsto \frob_{geom}$,
where $\frob_{geom}\in\gal(\bF_q/\F_q)$ is the geometric Frobenius of $\F_q$.
Proposition~\ref{frobeniussimilitude} provides a canonical
representation of the Weil group arising from $(E,(B,\sigma))$:
\begin{proposition}\label{pro:similitude-rep} Let $E$ be an elliptic curve with a spin structure
$(B,\sigma)$. Then there is a canonical similitude representation
\begin{equation}
  \rho_{E,\sigma}: W(\bF_{q}/\F_q) \to \go^+(B,\sigma),
\end{equation}
which is given by
$$\rho_{E,\sigma}(\frob_{geom})=(\tau,\tau)\in\go^+(B,\sigma)(K)=R_{K/\Q}\Gm(K)=K^*\times K^*.$$
\end{proposition}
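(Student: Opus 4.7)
The plan is to construct $\rho_{E,\sigma}$ by simply sending the geometric Frobenius $\frob_{geom}$ to the Frobenius endomorphism $\tau_E\in B$, and to check that the resulting assignment lands in $\go^+(B,\sigma)$ and has the claimed coordinate form under the identification $\go^+(B,\sigma)=R_{K/\Q}\Gm$ furnished by Proposition~\ref{pro:goplus}.

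First I would recall that $W(\bF_q/\F_q)\cong\Z$ via the choice $1\mapsto\frob_{geom}$ fixed at the start of the subsection. Hence to produce a homomorphism from $W(\bF_q/\F_q)$ into any group $G$ it is enough to name a single element of $G$, and the resulting homomorphism is automatically canonical once that element has been specified canonically. I would then appeal to Proposition~\ref{frobeniussimilitude}, which asserts exactly that $\tau_E$ is a proper similitude of $(B,\sigma)$, i.e.\ $\tau_E\in\go^+(B,\sigma)(\Q)$. Setting $\rho_{E,\sigma}(\frob_{geom})=\tau_E$ thus defines the required homomorphism $\rho_{E,\sigma}:W(\bF_q/\F_q)\to\go^+(B,\sigma)$.

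Next I would translate this into the coordinate description. By Proposition~\ref{pro:goplus}, with $K=C^+(B,\sigma)$, we have $\go^+(B,\sigma)=R_{K/\Q}\Gm$ as an algebraic group over $\Q$. For any $\Q$-algebra $R$ the $R$-points are $(K\tensor_\Q R)^*$. Since $K/\Q$ is étale quadratic (it is an imaginary quadratic extension of $\Q$ by Proposition~\ref{clifford-classes}), the ring $K\tensor_\Q K$ splits canonically as $K\times K$, the two factors corresponding to the two $\Q$-algebra embeddings $K\into K$ (the identity and the nontrivial Galois involution). Thus $\go^+(B,\sigma)(K)=K^*\times K^*$ as stated. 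Under this identification the diagonal $\Q$-rational points embed as $a\mapsto(a,a)$, and since $\tau_E$ lies in the center $\Q$ of $B$, it lands on the diagonal, giving the advertised formula $\rho_{E,\sigma}(\frob_{geom})=(\tau,\tau)$.

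The only nontrivial ingredient is Proposition~\ref{frobeniussimilitude} (which is already proved in the excerpt), so there is no real obstacle here; the statement is essentially a repackaging of the fact that Frobenius is a proper similitude, together with the canonical étale splitting of $K\tensor_\Q K$. The one point that deserves a careful word is the canonicity of the isomorphism $K\tensor_\Q K\isom K\times K$, since its two factors are distinguished only by the choice of which embedding is labeled as the identity; but this choice corresponds precisely to distinguishing $\frob_{geom}$ from $\frob_{arith}$, so it is already built into the normalization fixed at the beginning of the subsection.
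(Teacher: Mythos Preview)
Your proposal is correct and follows the paper's own approach: the paper does not give a separate proof of this proposition but simply presents it as the repackaging of Proposition~\ref{frobeniussimilitude} (the sentence immediately preceding the proposition says exactly that), and you have spelled out the details of that repackaging accurately. One small remark: your final paragraph about the canonicity of $K\tensor_\Q K\isom K\times K$ and its alleged link to the choice of $\frob_{geom}$ versus $\frob_{arith}$ is unnecessary and a bit off---the ordering of the two factors has nothing to do with the Weil group normalization---but since $\tau\in\Q^*$ lands on the diagonal in either ordering, this ambiguity is irrelevant to the formula $(\tau,\tau)$ anyway.
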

\subsection{Spinorial liftings}
 Let $\rho:W(\bF_q/\F_q)\to\go^+(B,\sigma)$ be a homomorphism of groups.
 We say that $\rho$ admits a spinorial lifting if there exists a
 representation $\rho':W(\bF_q/\F_q)\to \gspin(B,\sigma)$ which makes the following diagram commutative:
\begin{equation}
 \xymatrix{
& \gspin(B,\sigma)\ar[d]\\
W(\bF_q/\F_q)\ar[ur]^{\rho'}\ar[r]^\rho&\go^+(B,\sigma)
 }
\end{equation}

\subsection{A criterion for existence of Spinorial liftings}
\begin{theorem}\label{existence-criterion}
 Let $E/\F_q$ be an elliptic curve with a spin structure $(B,\sigma)$.
 Let $K=C^+(B,\sigma)$ be the associated even Clifford algebra.
 Let $\tau_E=\tau\in \End(E)$ be the Frobenius endomorphism of $E$. Then a lift $\rho_E^{spin}:W(\bF_q/\F_q)\to
 \gspin(B,\sigma)$ exists if and only if the Clifford algebra $K=K(B,\sigma)$ satisfies $K=F(\sqrt{\tau})$.
\end{theorem}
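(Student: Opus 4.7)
The plan is to reduce the existence of a spinorial lift to the existence of a square root of $\tau$ inside $K^*$, exploiting the explicit descriptions $\gspin(B,\sigma)=R_{K/\Q}\Gm$ and $\go^+(B,\sigma)=R_{K/\Q}\Gm$ from Proposition~\ref{pro:goplus} and equation~\eqref{eq:defngspin}. Since $W(\bF_q/\F_q)\isom\Z$ is generated by $\frob_{geom}$, any representation of $W(\bF_q/\F_q)$ into an algebraic group is determined by the image of this generator. By Proposition~\ref{pro:similitude-rep}, under the identification $\go^+(B,\sigma)(\Q)=K^*$, we have $\rho_{E,\sigma}(\frob_{geom})=\tau\in\Q^*\subset K^*$; note that $\tau$ is a scalar because it lies in the center of $B$ by Proposition~\ref{properties-of-spinorial}.

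Next I would appeal to the restriction-of-scalars exact sequence~\eqref{res-scalars}, which identifies the canonical isogeny $\gspin(B,\sigma)\to\go^+(B,\sigma)$ with the squaring map on $R_{K/\Q}\Gm$, this being the image under $R_{K/\Q}$ of the classical Kummer sequence $1\to\mu_2\to\Gm\to\Gm\to 1$. On $\Q$-points this map is simply $s\mapsto s^2$ from $K^*$ to $K^*$. Therefore a lift $\rho_E^{spin}:W(\bF_q/\F_q)\to\gspin(B,\sigma)$ exists if and only if the image $\tau$ of $\frob_{geom}$ admits a square root in $K^*$, equivalently if and only if $\sqrt{\tau}\in K$.

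Finally I would translate this condition into the stated form $K=\Q(\sqrt{\tau})$. By Proposition~\ref{clifford-classes}, $K/\Q$ is an imaginary quadratic extension, hence $[K:\Q]=2$. In the cases of interest (for instance over $\F_{p^{2n}}$ with $n$ odd, where Frobenius acts as $\pm p^n$), $\tau$ is not a square in $\Q$, so $\Q(\sqrt{\tau})$ is itself a genuine quadratic extension of $\Q$, and the containment $\Q(\sqrt{\tau})\subseteq K$ forces equality by a dimension count. The main bookkeeping task is to verify that the diagram of $\Q$-points really collapses to the squaring map $K^*\to K^*$, but this is immediate from~\eqref{res-scalars}; no substantial arithmetic difficulty lies beyond this identification.
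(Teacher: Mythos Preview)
Your argument is correct and follows essentially the same route as the paper: both reduce the lifting problem to whether $\tau$ lies in $(K^*)^2$ via the restriction-of-scalars Kummer sequence~\eqref{res-scalars}, the paper phrasing this as the vanishing of the obstruction class in $K^*/K^{*2}=H^1(G_\Q,R_{K/\Q}\mu_2)$ while you read it off directly from the $\Q$-points. Your final paragraph even handles a point the paper leaves implicit, namely the passage from $\sqrt{\tau}\in K$ to the equality $K=\Q(\sqrt{\tau})$, which genuinely requires that $\tau\notin\Q^{*2}$.
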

\begin{proof}
 The endomorphism $\tau\in \End(E)=B$ is a central element in $B$ with reduced norm $\Nrd(\tau)=q$.
 So by Theorem~\ref{waterhouse} $\tau=\pm \sqrt{q}$. We have an exact sequence of groups obtained
 by taking Galois cohomology of \eqref{res-scalars} with $G_\Q=\gal({\bar \Q}/\Q)$, we get (for the Galois cohomology computations, which are easy, the unfamiliar reader may use \cite[Lemma 29.6, page 394]{knus-book}):
\begin{equation}
 1\to \mu_2(K)\to K^*\to K^*\to K^*/K^{*2}=H^1(G_\Q,R_{K/\Q}(\mu_2)).
\end{equation}
Then $\tau\in
\go^+(B,\sigma)$ lifts to
$\gspin(B,\sigma)$ if and
only if its image in
$K^*/K^{*2}$ is $1$.
Equivalently a lifting
exists if and only if
$\sqrt{\tau}\in K$. When
this happens, we have a
lift
$$\rho^{spin}:W(\bF_q/\bF_q)\to
\gspin(B,\sigma),$$ given by
$$\rho^{spin}(\frob_{geom})=(\sqrt{\tau},-\sqrt{\tau})\in\gspin(B,\sigma)(K)=K^*\times
K^*.$$  This prove the proposition.
\end{proof}

\section{Arithmetic spin structures}
\subsection{Definition of Arithmetic spin structures}
Let $E/\F_q$ be an elliptic curve with a spin structure
$(B,\sigma)$.  We say that the spin structure $(B,\sigma)$ is an
\textit{arithmetic spin structure} if the canonical similitude
representation $\rho_{E,\sigma}:W(\bF_q/\F_q)\to \go^+(B,\sigma)$
admits a spinorial lifting.
\subsection{Existence of an arithmetic spin structure} We now show that
arithmetic spin structures exists on an elliptic curve under suitable circumstances.
\begin{theorem}\label{the:existenceofarithmetic}
 Let $E/\F_q$ be an elliptic curve of spinorial type with $q=p^{2n}$. Let $B=\End(E)$ and
 let $\tau\in B$ be the Frobenius endomorphism of $E$.
 Then we have the following:
\begin{enumerate}
 \item if $n$ is odd then there exists a unique arithmetic spin structure of
 discriminant $-p^n$ on $E/\F_q$ if and only if $\tau\in \End(E)$ is given by multiplication by $-p^n$.
\item if $n$ is even, then there exists an arithmetic spin structure of $E$ if and
only if $B$ contains a pure quaternion with reduced norm $1$.
\end{enumerate}
\end{theorem}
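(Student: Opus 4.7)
The plan is to reduce everything to the criterion of Theorem~\ref{existence-criterion}: the similitude representation $\rho_{E,\sigma}$ admits a spinorial lifting exactly when $\sqrt{\tau}\in K$, where $K=C^+(B,\sigma)$. Thus the whole problem becomes translating this membership into an algebraic condition on $\sigma$ and then deciding when an orthogonal involution realising it actually exists on $B$.

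First I would collect what is known about $\tau$ and $K$. Because $E$ is of spinorial type over $\F_{p^{2n}}$, Proposition~\ref{properties-of-spinorial} together with case~$2(a)$ of Theorem~\ref{waterhouse} place $\tau$ in the centre $\Q$ of $B$ with $\tau^2=q=p^{2n}$, so $\tau=\pm p^n$, and identify $B$ as the unique quaternion algebra ramified at $p$ and $\infty$. By Corollary~\ref{center-and-disc2} we have $K=\Q(\sqrt{\delta_\sigma})$ where $\delta_\sigma$ represents $\disc(\sigma)\in\Q^*/\Q^{*2}$, and Proposition~\ref{clifford-classes} forces $\delta_\sigma<0$. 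Writing $\sigma=\int(u)\circ\gamma$ as in~\ref{ssec:Recurri-Example} with $u$ a pure quaternion gives $\delta_\sigma=-\Nrd(u)$. The elementary fact that glues the pieces together is this: for $r\in\Q^*$ and any imaginary quadratic field $\Q(\sqrt{\delta})$, $r$ is a square in $\Q(\sqrt{\delta})^*$ iff either $r\in\Q^{*2}$ or $r\delta\in\Q^{*2}$.

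For part~(1), $n$ odd: apply this fact to $r=\tau=\pm p^n$. The sign $\tau=+p^n$ is ruled out ($p^n$ is not a rational square, and $\tau\delta_\sigma<0$ cannot be one either), so $\tau=-p^n$ is forced. Then $\tau\delta_\sigma\in\Q^{*2}$ gives $\delta_\sigma\equiv -p\pmod{\Q^{*2}}$, equivalently $\Nrd(u)\equiv p\pmod{\Q^{*2}}$. Conversely, since $B$ is ramified at $p$ it contains a pure quaternion $j$ with $j^2=-p$, yielding $\Nrd(j)=p$ and an involution with the required discriminant. Uniqueness of $\sigma$ up to isomorphism is exactly the classification of orthogonal involutions on a quaternion algebra by their discriminant recalled in~\ref{ssec:Discrim}.

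For part~(2), $n$ even: $\tau=+p^n=(p^{n/2})^2$ is automatically a rational square, so the criterion holds for every orthogonal $\sigma$ in that sub-case; while $\tau=-p^n=-(p^{n/2})^2$ forces $\delta_\sigma\equiv -1\pmod{\Q^{*2}}$, i.e.\ $\Nrd(u)$ a rational square. I would then show that both sub-cases are captured by the single criterion "$B$ contains a pure quaternion with reduced norm $1$": such a $u$ produces a $\sigma$ of discriminant $-1$ which is arithmetic for either sign of $\tau$, while its absence precludes the $\tau=-p^n$ sub-case. The main obstacle I anticipate is making this case~(2) equivalence honest on the nose: one must verify that the isomorphism class of $\sigma$ depends only on $\Nrd(u)$ modulo $\Q^{*2}$ (so "$\Nrd(u)=1$" can be read literally or modulo squares without loss), and that both possible signs of $\tau$ are covered uniformly by the stated condition on $B$. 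Once that is pinned down, the remainder is routine bookkeeping with Theorem~\ref{existence-criterion} and the discriminant classification.
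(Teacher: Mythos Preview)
Your plan for part~(1) is correct and is essentially the paper's own argument: both proofs invoke Theorem~\ref{existence-criterion} to reduce to $\sqrt{\tau}\in K$, rule out $\tau=+p^n$ via the negativity of $\disc(\sigma)$ (Proposition~\ref{clifford-classes}), construct the required involution from a pure quaternion $v$ with $v^2=-p$ furnished by the ramification of $B$ at $p$, and appeal to the discriminant classification for uniqueness. The only cosmetic difference is that the paper scales to a quaternion of reduced norm $p^n$ while you work directly with norm $p$; modulo $\Q^{*2}$ these give the same discriminant class.

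For part~(2) your case analysis on the sign of $\tau$ is in fact \emph{more} careful than the paper's: the paper's proof silently assumes $\tau=-p^n$ from the first sentence and argues exactly as you do in that sub-case (namely, $\sqrt{\tau}\in K\Leftrightarrow \sqrt{-1}\in K\Leftrightarrow \Nrd(u)\in\Q^{*2}\Leftrightarrow$ some pure quaternion has reduced norm $1$). The obstacle you flag for $\tau=+p^n$ is genuine and is not addressed by the paper either. Indeed, if $n$ is even and $\tau=+p^n=(p^{n/2})^2$, then $\tau\in K^{*2}$ for \emph{every} orthogonal $\sigma$, so that isogeny class always carries arithmetic spin structures; yet for $p\equiv 1\pmod 4$ the field $\Q(i)$ does not split $B$ at $p$ and hence does not embed in $B$, so $B$ contains no pure quaternion of reduced norm $1$. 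Thus the biconditional in~(2) cannot hold as stated without restricting to the isogeny class with $\tau=-p^n$. Your instinct that ``making this case~(2) equivalence honest on the nose'' is the real difficulty is exactly right; the paper's proof should be read as establishing~(2) only under the tacit hypothesis $\tau=-p^n$.
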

\begin{proof}
We prove (1). If $E$ carries an arithmetic spin structure $\sigma$
of discriminant $-p^n$, then by Theorem~\ref{existence-criterion} we
see that $\tau^2\in C^+(B,\sigma)$ and
$\Q(\sqrt{\tau})=C^+(B,\sigma)=\Q[x]/(x^2+p^n)$ thus $\tau=-p^n$. So
we have to prove the converse.

To produce an arithmetic spin structure we have to prove the
existence of a spin structure $(B,\sigma)$ on $E/\F_q$ such that the
canonical similitude representation
$$\rho_{E,\sigma}:W(\bF_q/\F_q)\to \go^+(B,\sigma)$$ lifts to a spin
representation. By Theorem~\ref{existence-criterion}, this happens
if and only if $\tau=\pm p^n\in K^{*2}$. By
Proposition~\ref{clifford-classes} we know that the  discriminants
of involutions which can occur as spin structures are all negative.
So we see that there is no orthogonal involution on $B$ whose
discriminant can be $+p^n$. So it remains to show that there is an
involution $\sigma$ whose discriminant is $-p^n$.
 To construct an involution of the first kind and of orthogonal
type on $B$ with discriminant $-p^n$ it suffices to construct a pure quaternion $u\in B$
(i.e. a quaternion with reduced trace zero) with reduced norm $\Nrd(u)=p^n$.
Indeed given such a $u$, the discriminant of
the involution $\sigma=\int(u)\circ\gamma$ is given by (see
\ref{ssec:Discrim})
$$\disc(\sigma)=-\Nrd(u)=-p^n.$$ Thus we see that the even Clifford algebra
$C^+(B,\sigma)$ is $C^+(B,\sigma)=K=\Q[x]/(x^2+p^n)$. Further
orthogonal involutions on $B$ are classified, up to isomorphism, by
their discriminants (by \cite[7.4, Page 82]{knus-book} any two orthogonal involutions differ by an inner conjugation by a non-zero quaternion). So the spin structure $(B,\sigma)$ is unique up to isomorphism and is arithmetic.

So let us construct the required $u$. We claim now that there exists
a quaternion $u\in B$ of trace zero, with reduced norm $p^n$, for
any odd value of $n$. Indeed it is easy to see that $B$ can be given
by symbols $\hasse{-a}{-p}{}$ for a suitable choice of $a$ and so
$B$ contains a trace zero quaternion $v$ such that $v^2=-p$. The
reduced norm of $v$ is $\Nrd(v)=p$ and hence the quaternion
$u=p^{(n-1)/2}v$ has reduced norm $p^n$. Thus the involution
$\sigma=\int(u)\circ\gamma$ has discriminant
$$\disc(\sigma)=-\Nrd(u)=-p^n$$
as claimed.  Thus the Clifford algebra $K=\Q[x]/(x^2+p^n)$, and so
$\sqrt{-p^n}\in K$ and so we see that if $E$ is such that
$\tau=-p^n$, then $\tau$ has a square root in $K$ and so we have a
lifting
\begin{equation}
\rho^{spin}_{E,\sigma}:W(\bF_q/\F_q)\to \gspin(B,\sigma)
\end{equation} given by $$\rho^{spin}_{E,\sigma}(\frob)=(\sqrt{\tau},-\sqrt{\tau})\in\go^+(B,\sigma)(K)=K^*\times K^*.$$

Now we prove (2). If $n$ is even, then $\tau=-p^n$ has a square root
in $K$ if and only if $\sqrt{-1}\in K$. So  the existence
of an involution with this property is tantamount to finding a pure
quaternion $v$ whose reduced norm is $1$. If we can find such a
$v$, then we can take $u=p^{n/2}v$ and observe that $u$ has reduced norm
$\Nrd(u)=p^n$. Hence $\sigma=\int(u)\circ\gamma$ is an orthogonal
involution of the first kind with discriminant $\disc(\sigma)=-p^n$ and
$\tau=-p^n$ has a square root in $K=\Q[x]/(x^2+p^n)=\Q(i)$.
\end{proof}

\begin{corollary}\label{cor:existenceoverfp2}
Let $E/\F_{p^2}$ of spinorial type. Then $E$ carries an arithmetic
spin structure if and only if the Frobenius endomorphism of $E$
operates by multiplication by $-p$. If $E$ has an arithmetic spin
structure, then it is unique up to isomorphism.
\end{corollary}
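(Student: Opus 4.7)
The plan is to deduce this corollary from Theorem~\ref{the:existenceofarithmetic} applied with $n=1$, adding one small consistency check. Since $n=1$ is odd, only case (1) of that theorem is relevant, and it essentially delivers both the criterion and the uniqueness assertion. The only genuine gap to fill is that the theorem produces arithmetic spin structures of a \emph{prescribed} discriminant $-p$, whereas the corollary asserts that no arithmetic spin structure of any other shape exists on $E$ either.

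First I would handle the forward direction. Assume $E/\F_{p^2}$ is of spinorial type and carries an arithmetic spin structure $(B,\sigma)$. By Proposition~\ref{properties-of-spinorial}, the Frobenius $\tau$ lies in the center of $B$, which equals $\Q$, and $\tau^2 = p^2$ forces $\tau \in \{+p,-p\}$. By Theorem~\ref{existence-criterion}, a spinorial lifting exists precisely when $K = C^+(B,\sigma) = \Q(\sqrt{\tau})$. But Proposition~\ref{clifford-classes} forces $K$ to be imaginary quadratic, since $B$ is ramified at $\infty$. This rules out $\tau = +p$, since $\Q(\sqrt{p})$ is real, and leaves only $\tau = -p$. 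Along the way we have also pinned down the discriminant of $\sigma$: it must satisfy $\disc(\sigma) \equiv -p \pmod{\Q^{*2}}$.

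For the reverse direction I would invoke Theorem~\ref{the:existenceofarithmetic}(1) directly with $n=1$: assuming $\tau = -p$, the theorem constructs an arithmetic spin structure of discriminant $-p$ by writing $B = \hasse{-a}{-p}{}$, taking a pure quaternion $v \in B$ with $v^2 = -p$, and setting $\sigma = \int(v)\circ\gamma$.

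Finally, for uniqueness up to isomorphism, the forward analysis above already shows that \emph{any} arithmetic spin structure $(B,\sigma)$ on $E$ must satisfy $\disc(\sigma) \equiv -p \pmod{\Q^{*2}}$. Since orthogonal involutions on a quaternion algebra are classified up to isomorphism by their discriminant in $\Q^*/\Q^{*2}$ (the fact from \cite[Example~7.4]{knus-book} invoked inside the proof of Theorem~\ref{the:existenceofarithmetic}), this fixes $\sigma$ up to isomorphism. The only ``obstacle'' worth noting is the $\tau = +p$ ruling-out in the forward direction; everything else is a direct specialization of the $n=1$ case of the main theorem.
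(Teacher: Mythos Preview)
Your proposal is correct and follows the paper's intended approach: the corollary is stated without a separate proof, being the immediate specialization of Theorem~\ref{the:existenceofarithmetic}(1) to $n=1$. Your added observation that \emph{any} arithmetic spin structure on $E$ must have discriminant $\equiv -p$ (via Theorem~\ref{existence-criterion} and Proposition~\ref{clifford-classes}) is exactly the small consistency check needed to pass from the theorem's ``discriminant $-p^n$'' formulation to the corollary's unqualified one, and these same ingredients already appear inside the proof of the theorem itself.
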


\section{A candidate for $\Q(1/4)$}
\subsection{Definition of $\Q(1/4)$} Let $E/\F_{p^2}$ be an elliptic curve with an
arithmetic spin structure. We will define $\Q(1/4)$ to be the triple
$(E,(B,\sigma))$ where $E$ is our elliptic curve and $B=\End(E)$ and
$(B,\sigma)$ is the arithmetic spin structure on $E$. We show now
that $\Q(1/4)$ is equipped with an $\ell$-adic (for $\ell\neq p$)
and a crystalline realization at $p$.
\subsection{The $\ell$-adic realization of $\Q(1/4)$}\label{ell-realization}
The object $\Q(1/4)$ comes equipped with an $\ell$-adic realization,
denoted $\Q(1/4)_\ell$. We define the $\ell$-adic realization
$\Q(1/4)_\ell$ of $\Q(1/4)$ to be the representation obtained by
extension of scalars of the canonical spin representation
$\rho^{spin}_{E,\sigma}\to\gspin(B,\sigma)$. This gives us a Weil
sheaf (by \cite{deligne80} such a sheaf is specified by specifying a
representation of the Weil group) on the point ${\rm
Spec}(\F_{p^2})$:
\begin{equation}
\rho^{spin}_{E,\sigma,\ell}:W(\bF_{p^2}/\F_{p^2})\to
R_{K/\Q}\Gm(\Q_\ell)=\gspin(B,\sigma)(\Q_\ell).
\end{equation}
\subsection{The crystalline realization of $\Q(1/4)$}\label{p-realization}
Let $(E,(B,\sigma))$ be an elliptic curve over $\F_{p^{2n}}$ with an
arithmetic spin structure $(B,\sigma)$. Let $W=W(\F_{p^{2n}})$ be
the ring of Witt vectors of $\F_{p^{2n}}$. Let $F:W\to W$ be the
Frobenius morphism of $W$. Let  $K_0=W\tensor\Q_p$ be the quotient field of
$W$ and let $F:K_0\to K_0$ be the extension of $F$ to $K_0$. Let
$H^1_{cris}(E/W)$ be the crystalline cohomology of $E$; we have an
$F$-linear map $\phi:H^1_{cris}(E/W)\to H^1_{cris}(E/W)$. The data $(H^1_{cris}(E/W),\phi)$ is the data of an $F$-crystal associated to $E/\F_{p^{2n}}$. Let
$(M,\phi)$ denote the extension of the $F$-crystal
$(H^1_{cris}(E/W),\phi)$ to an $F$-isocrystal over $K_0$; since $E$ has an arithmetic spin structure, the $F$-linear
map $\phi:M\to M$ is simply the map $\phi=-p^nF$. We want to construct a crystal
$(M^{spin},\phi^{spin})$ which we would like to call the crystalline
realization of $\Q(1/4)$. The endomorphism algebra of $(M,\phi)$ is
the unique quaternion algebra $B\tensor \Q_p$ over $\Q_p$. The
orthogonal involution  $\sigma:B\to B$ extends to an orthogonal
involution $\sigma_p:B\tensor\Q_p\to B\tensor\Q_p$. The data
$(M,\phi,\sigma_p)$ is an $F$-isocrystal equipped with a spin
structure (in the obvious sense). The Clifford algebra
$C^+(M,\phi,\sigma_p)$ is a $\Q_p$ vector space isomorphic to
$\Q_p[x]/(x^2+p^n)$. Define $M^{spin}=C^+(M,\phi,\sigma_p)$, and
define $\phi^{spin}$ to be multiplication by $x\in \Q_p[x]/(x^2+p^n)$
thought of as an $F$-linear endomorphism of $M^{spin}$. This gives
us a Dieudonne isocrystal $(M^{spin},\phi^{spin})$ over $\Q_p$ which
we call the crystalline realization of $\Q(1/4)$ and denote it by
$\Q(1/4)_p$.
\subsection{The $L$-function of $\rho^{spin}_{E,\sigma}$}
Let $E/\F_q$ be an elliptic curve with an arithmetic spin structure
$(B,\sigma)$. Then $q=p^{2n}$ and the Frobenius endomorphism of $E$
operates by $-p^n=-\sqrt{q}$; the eigenvalues of Frobenius under the
spin representation $\rho^{spin}_{E,\sigma}$ are $\pm
\sqrt{-p^n}=\pm\sqrt{-\sqrt{q}}$. Thus the Hasse-Weil zeta function of
$\rho^{spin}_{E,\sigma}$ is given by
\begin{equation}
Z(\rho^{spin}_{E,\sigma},T)=\frac{1}{\left(1-\sqrt{-p^n}T\right)\left(1+\sqrt{-p^n}T\right)}.
\end{equation}
We will write
\begin{equation}
L(\rho^{spin}_{E,\sigma},s)=Z(\rho^{spin}_{E,\sigma},q^{-s}).
\end{equation}
And this is
\begin{equation}\label{lrho}
L(\rho^{spin}_{E,\sigma},s)=\frac{1}{\left(1+q^{\frac{1}{2}-2s}\right)}.
\end{equation}
We note that \eqref{lrho} can also be written more suggestively as
\begin{eqnarray}\label{lrho2}
L(\rho^{spin}_{E,\sigma},s)&=&\frac{1}{\left(1+q^{\frac{1}{2}-2s}\right)}\\
&=&\frac{1}{\left(1+{q^2}^{(\frac{1}{4}-s)}\right)}\\
&=&\frac{1}{\left(1+\sqrt{-1}q^{\frac{1}{4}-s}\right)\left(1-\sqrt{-1}q^{\frac{1}{4}-s}\right)},
\end{eqnarray}
especially the last equality which makes the $1/4$-twisting more transparent.
Note that the presence of the two factors in the last equality should be seen as a manifestation
of the fact that the Clifford algebra $C^+(B,\sigma)$ is a quadratic extension of $\Q$.
\subsection{The $L$-function of $H^1(E)$}
Since the Frobenius endomorphism of $E$ operates by $-\sqrt{q}$, we
can also calculate the $L$-function  $E$, that is the reciprocal of
the characteristic polynomial of Frobenius on $H^1(E,\Q_\ell)$. The
characteristic polynomial  is given by
\begin{equation}\label{zetaE}
Z(H^1(E),T)=\left(1+\sqrt{q}T\right)^2.
\end{equation}
The $L$-function of $E$ is defined as
\begin{equation}\label{lfuncte}
    L(E,s)=Z(H^1(E),q^{-s})^{-1}=\frac{1}{\left(1+q^{\frac{1}{2}-s}\right)^2}
\end{equation}
\subsection{A relation between $L(\rho^{spin}_{E,\sigma},s)$ and $L(H^1(E),s)$}
From \eqref{lrho} and \eqref{zetaE} we  have the relation:
\begin{theorem}\label{l-identities}
For an elliptic curve $E/\F_q$ with an arithmetic spin structure
$(B,\sigma)$ and spinorial representation
$\rho^{spin}_{E,\sigma}:W(\bF_q/\F_q)\to \gspin(B,\sigma)$, we have
$$L(E,s)=L\left(\rho^{spin}_{E,\sigma},\frac{s}{2}\right)^2.$$
In particular we have
$$L(E,1)=L\left(\rho^{spin}_{E,\sigma},\frac{1}{2}\right)^2.$$
\end{theorem}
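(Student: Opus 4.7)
The plan is a direct comparison of the two explicit formulas already established in the excerpt, so the argument is essentially a substitution rather than a genuine computation. First, I would invoke Theorem~\ref{the:existenceofarithmetic} (together with Corollary~\ref{cor:existenceoverfp2} in the case $n=1$) to ensure that the Frobenius endomorphism of $E$ acts by $-p^n = -\sqrt{q}$, so that the eigenvalues of Frobenius on the spin representation are $\pm\sqrt{-p^n}$, as recorded in the discussion preceding \eqref{lrho}. This fixes both local $L$-factors in completely explicit form, and the identity we want is then a matter of comparing rational functions of $q^{-s}$.

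Next, I would simply transcribe the two formulas. On one side, equation \eqref{lrho} gives
\begin{equation*}
L(\rho^{spin}_{E,\sigma}, s) \;=\; \frac{1}{1 + q^{\frac{1}{2} - 2s}}.
\end{equation*}
Substituting $s/2$ for $s$ yields
\begin{equation*}
L\!\left(\rho^{spin}_{E,\sigma}, \tfrac{s}{2}\right) \;=\; \frac{1}{1 + q^{\frac{1}{2} - s}}.
\end{equation*}
On the other side, equation \eqref{lfuncte} records
\begin{equation*}
L(E, s) \;=\; \frac{1}{\bigl(1 + q^{\frac{1}{2} - s}\bigr)^{2}}.
\end{equation*}
Squaring the preceding line and comparing gives $L(E,s) = L(\rho^{spin}_{E,\sigma}, s/2)^{2}$ as claimed, and setting $s=1$ yields the special value $L(E,1) = L(\rho^{spin}_{E,\sigma}, 1/2)^{2}$.

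Since the whole argument is essentially one substitution, there is no real obstacle — the conceptual work was already done in \S~\ref{ell-realization} and in the computation leading to \eqref{lrho2}, where the $1/4$-twisting is made transparent by rewriting $q^{1/2-2s} = (q^{2})^{1/4 - s}$. I would end the proof by remarking that the factor of $2$ in the variable substitution reflects precisely that $\rho^{spin}_{E,\sigma}$ has weight $1/2$ while $H^{1}(E)$ has weight $1$, and that the squaring on the right reflects the fact that $H^{1}(E,\Q_\ell)$ is two dimensional while the spin representation is one dimensional over $C^{+}(B,\sigma) \otimes \Q_\ell$; this way, the formal identity acquires the arithmetic content anticipated in the discussion of \cite{ramachandran05}.
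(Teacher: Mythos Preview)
Your proposal is correct and follows essentially the same approach as the paper: the paper simply states that the relation follows from \eqref{lrho} and \eqref{zetaE}, and your argument makes this explicit by substituting $s/2$ into \eqref{lrho} and comparing with \eqref{lfuncte}. The additional conceptual remarks you append about weights and dimensions are reasonable glosses but are not part of the paper's (essentially one-line) proof.
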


\bibliographystyle{mrl}
%\bibliography{spin}

\end{document}